\newtheorem{theorem}{Theorem}[section]
\newtheorem{proposition}[theorem]{Proposition}
\newtheorem{lemma}[theorem]{Lemma}
\newtheorem{conjecture}[theorem]{Conjecture}
\newtheorem{claim}[theorem]{Claim}
\theoremstyle{definition}
\newtheorem{definition}[theorem]{Definition}
\newtheorem{remark}[theorem]{Remark}
\begin{document}
\title{\textbf{Two characterizations of the grid graphs}}
\author[a]{Brhane Gebremichel}
\author[b]{Meng-Yue Cao\footnote{Corresponding author.}}
\author[a,c]{Jack H. Koolen}
\affil[a]{\footnotesize{School of Mathematical Sciences, University of Science and Technology of China, 96 Jinzhai Road, Hefei, 230026, Anhui, PR China.}}
\affil[b]{\footnotesize{School of Mathematical Sciences, Beijing Normal University, 19 Xinjiekouwai Street, Beijing, 100875, PR China.}}
\affil[c]{\footnotesize{CAS Wu Wen-Tsun Key Laboratory of Mathematics, University of Science and Technology of China, 96 Jinzhai Road, Hefei, Anhui, 230026, PR China}}
\date{}
\maketitle
\newcommand\blfootnote[1]{%
\begingroup
\renewcommand\thefootnote{}\footnote{#1}%
\addtocounter{footnote}{-1}%
\endgroup}
\blfootnote{2010 Mathematics Subject Classification. Primary 05C50, secondary 05E99.}
\blfootnote{E-mail addresses: {\tt brhaneg220@mail.ustc.edu.cn} (B. Gebremichel), {\tt cmy1325@163.com} (M.-Y. Cao), {\tt koolen@ustc.edu.cn} (J.H. Koolen).}

\begin{abstract}
In this paper we give two characterizations of the $p \times q$-grid graphs as co-edge-regular graphs with four distinct eigenvalues.
\end{abstract}

\textbf{Keywords} : Strongly co-edge-regular graphs, grid graphs, co-edge-regular graphs with four distinct eigenvalues, walk-regular.

\section{Introduction}
All graphs mentioned in this paper are finite, undirected and simple. For undefined notations, see \cite{BCN} and \cite{BH11}.
The eigenvalues of a graph are the eigenvalues of its adjacency matrix in this paper.
Recall that a co-edge-regular graph with parameters $(n, k, c)$ is a $k$-regular graph with $n$ vertices, such that any two distinct non-adjacent vertices have exactly $c$ common neighbours.

In this paper, we continue our study on co-edge-regular graphs with four distinct eigenvalues. A motivation to study these graphs comes from the lecture note \cite{Terwilliger}. In these note, Terwilliger shows that any local graph of a thin $Q$-polynomial distance-regular graph is co-edge-regular and has at most five distinct eigenvalues. So it is interesting to study co-edge-regular graphs with a few distinct eigenvalues. Another motivation is as follows: strongly regular graphs have attracted a lot of attention, see for example \cite{VanLint1984}. On the other hand, there are only a few papers on regular graphs with four eigenvalues, see for example \cite{vanDam1995,vanDam1999,vanDam1998,Huang2017}. We think that connected co-edge-regular graphs are an interesting class of graphs.

Tan, Koolen and Xia \cite{Tan.2020} gave the following conjecture.

\begin{conjecture}\label{conjecture by TKX}
Let $G$ be a connected co-edge-regular graph with parameters $(n,k,c)$ having four distinct eigenvalues. Let $m \geqslant 2$ be an integer. Then there exists a constant $n_m$ such that, if $\theta_{\min}(G) \geqslant -m$ and $n \geqslant n_m$ and $k<n-2-\frac{(m-1)^2}{4}$, then either $G$ is the $s$-clique extension of a strongly regular graph for $2 \leqslant s \leqslant m-1$ or $G$ is a $p \times q$-grid with $ p > q \geqslant 2$.
\end{conjecture}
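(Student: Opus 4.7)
The plan is to exploit three sources of structure simultaneously: the algebraic restrictions imposed by having exactly four distinct eigenvalues, the classification of graphs with bounded smallest eigenvalue (which becomes sharper as $n$ grows), and the combinatorial leverage provided by co-edge-regularity together with its interaction with the complement.

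First I would record the feasibility constraints. Writing the spectrum of $G$ as $\{k^{1},\theta_1^{m_1},\theta_2^{m_2},\theta_3^{m_3}\}$ with $\theta_3=\theta_{\min}(G)\geqslant -m$, the trace identities for $A$, $A^{2}$, $A^{3}$ together with integrality of the $m_i$ already reduce the possible spectra to a short finite list in terms of $n,k,c$ and $m$. Moreover, a connected regular graph with four distinct eigenvalues that is co-edge-regular is walk-regular, so the diagonal of every polynomial in $A$ is constant; this pins the local triangle and quadrangle counts at each vertex to explicit functions of the spectrum. Next I would pass to the complement: by co-edge-regularity $\bar G$ is edge-regular of valency $n-k-1$, and the hypothesis $k<n-2-(m-1)^{2}/4$ is calibrated precisely so that $\bar G$ is dense enough to exclude the pathological complements (matchings, disjoint unions of very small cliques, and the like). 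Combined with $\theta_{\min}(G)\geqslant -m$, a Hoffman-type argument caps the maximum clique in $\bar G$, and Metsch's clique-identification lemma then produces a canonical family of Delsarte cliques in $G$ which together cover the edge set, with sizes controlled by $m$.

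A clean dichotomy should emerge from the geometry of this clique family. If the cliques partition $V(G)$ into equally sized bags, then $G$ is an $s$-clique extension of a graph $H$ with $s\leqslant m-1$; the known spectrum of such an extension, together with the four-eigenvalue hypothesis, forces $H$ to have exactly three distinct eigenvalues and hence to be strongly regular. Otherwise the cliques organise themselves into two orthogonal families meeting in single vertices, which under co-edge-regularity is the combinatorial signature of a $p\times q$-grid; the four eigenvalues of $G$ then determine $p$ and $q$ uniquely.

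The main obstacle, I expect, is producing this clique geometry from the eigenvalue bound when $m\geqslant 3$. For $m=2$ the Cameron--Goethals--Seidel--Shult root-system classification handles the local structure with room to spare, and the conjecture for $m=2$ should follow cleanly by the scheme above. For $m\geqslant 3$ no such tight classification is available, and one must invoke the recent asymptotic structural theorems for graphs with bounded smallest eigenvalue (in the spirit of Koolen--Yang and collaborators) to guarantee, for $n\geqslant n_m$, the existence of Delsarte cliques of size large enough for the dichotomy to activate. Making the threshold $n_m$ explicit, and ruling out the few non-fat exceptional configurations that can still survive the bound $k<n-2-(m-1)^{2}/4$, is where the real work is likely to lie.
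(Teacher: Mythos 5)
This statement is presented in the paper as a \emph{conjecture} (due to Tan, Koolen and Xia), not as a theorem: the paper offers no proof of it and, as far as I know, it remains open. The results actually proved in the paper (Theorems \ref{fourev}, \ref{grid} and \ref{main1}) settle only the very special case $c=2$, and even then only under the extra hypotheses $\ell\geqslant\frac{3}{4}k$ or $\theta_{\min}\geqslant-3$ with $k\geqslant 120$. So there is no ``paper's own proof'' to compare against, and your text should be judged as a research plan for an open problem rather than as a proof.

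As a plan it has genuine gaps that go beyond ``making $n_m$ explicit.'' The step that invokes Metsch-type clique identification is not available in this generality: those lemmas require control of $\lambda$ (the number of common neighbours of \emph{adjacent} vertices), which in a merely co-edge-regular graph is not constant; the paper has to work hard (Lemma \ref{twoconstants}, Lemma \ref{clique}, Theorem \ref{largec1}) just to extract partial control of the quantities $a_{xy}$ from walk-regularity, and only for $c=2$. Likewise, the asserted dichotomy --- either the Delsarte cliques partition $V(G)$ into equal bags (giving an $s$-clique extension) or they form two orthogonal families (giving a grid) --- is essentially a restatement of the conjecture's conclusion, not something that follows from the clique geometry without further argument; intermediate configurations are exactly what one must rule out, and nothing in your sketch does so. Finally, note that for $m=2$ the range $2\leqslant s\leqslant m-1$ is empty, so the conjecture there asserts $G$ is a grid outright; even that case is not carried out, and the root-system classification you cite classifies the graphs with $\theta_{\min}\geqslant-2$ but still leaves the four-eigenvalue and co-edge-regularity conditions to be exploited. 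In short: you have outlined a reasonable attack, but no step of it is completed, and the key structural input you rely on (Delsarte clique covers from $\theta_{\min}\geqslant-m$ for general $m$) is precisely the part that is not known.
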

The first result on co-edge-regular graphs was shown by Brouwer, Cohen and Neumaier \cite[Lemma 1.1.3]{BCN}. They
showed that a co-edge-regular graph with parameters $(n,k,1)$ is strongly regular. They gave some more conditions such that a co-edge-regular graph satisfying these conditions is strongly regular. Hayat, Koolen and Riaz \cite{Hayat2019} showed that any co-edge-regular graph cospectral with the $s$-clique extension of the $t\times t$-grid is the $s$-clique extension of the $t\times t$-grid, if $t$ is much larger than $s$. Tan et al. \cite{Tan.2020} showed that any co-edge-regular graph cospectral with the $s$-clique extension of the triangular graph $T(t)$ is the $s$-clique extension of $T(t)$. In this paper, we will concentrate on co-edge-regular graphs with parameters $(n,k,2)$ having exactly four distinct eigenvalues. A $p \times q$-grid is the line graphs of the complete bipartite graph $K_{p,q}$. In other words it is the cartesian product of the complete graphs $K_p$ and $K_q$. For connected co-edge-regular graphs with four distinct eigenvalues, we obtain the following two characterizations of the $p\times p$-grid.

\begin{theorem}\label{fourev}
Let $G$ be a co-edge-regular graph with parameters $(n, k, 2)$ with distinct eigenvalues $k=\theta_0  > \theta_1 > \theta_2 > \theta_3$.
Let $\ell := 2(\sum\limits_{i=1}^3 \theta_i ) +\frac{\prod_{i=1}^3(k-\theta_i)}{n} -2(k-2)$.
\begin{enumerate}
\item If $\ell\geqslant \frac{3}{4}k$, then $G$ is a $p\times q$-grid, where $p >q \geqslant 2$, $p+q=k+2$ and $\ell = k-2$ or the $2$-clique extension of $C_5$.
\item If $\theta_3 \geqslant -3$ and $k \geqslant 120$, then $G$ is a $p\times q$-grid, where  $p >q \geqslant 2$, $p+q=k+2$ and $\ell = k-2$.
\end{enumerate}
\end{theorem}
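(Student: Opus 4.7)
The plan is to exploit that $G$ has four distinct eigenvalues to pin down local combinatorial moments of $\lambda(x,y):=|N(x)\cap N(y)|$ on edges, and then use the $c=2$ hypothesis to rigidify the structure into a $p\times q$-grid. Write $s_i$ for the elementary symmetric functions in $\theta_1,\theta_2,\theta_3$ and $\alpha=\prod_{i=1}^{3}(k-\theta_i)/n$. Since $G$ is regular with exactly four distinct eigenvalues, the identity $(A-\theta_1 I)(A-\theta_2 I)(A-\theta_3 I)=\alpha J$ holds; reading it off at an $(x,x)$ entry gives walk-regularity of degree three, namely $\tau_x=\tau=(\alpha+ks_1+s_3)/2$ is independent of $x$, while reading it off at an edge $xy$ and using the co-edge-regular values $(A^2)_{xy}=\lambda(x,y)$ and $(A^2)_{zy}=2$ for $z\in N(x)\setminus(N(y)\cup\{y\})$ yields the key edge identity
$T(x,y):=\sum_{z\in N(x)\cap N(y)}\lambda(z,y)=(s_1+2)\lambda(x,y)+(\alpha-s_2-3k+2).$
Applying the same polynomial identity to $A^4$ forces $\sum_{y\sim x}\lambda(x,y)^2$ to be a spectral constant, so both the first and second moments of $\lambda$ on the neighbourhood of each vertex are determined by the spectrum.

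For part (1), eliminating $\alpha$ from the diagonal identity rewrites the hypothesis quantity as $\ell=2\tau-(k-2)(s_1+2)-s_3$; a direct check shows $\ell=k-2$ on the $p\times q$-grid, matching the target conclusion. With both moments of $\lambda$ over $N(x)$ pinned down by the spectrum, a variance/Cauchy--Schwarz argument combined with $\ell\geqslant\tfrac{3}{4}k$ should exclude the possibility of three or more distinct values of $\lambda(x,y)$ on edges, leaving at each vertex two classes of edges of fixed sizes $\mu$ and $k-\mu$. The hypothesis $c=2$, which forbids two non-adjacent vertices from sharing more than two common neighbours, should then force the edges in each $\lambda$-class to form a vertex-disjoint union of cliques of uniform size; two such clique covers meeting transversally at every vertex give precisely the $p\times q$-grid with $p+q=k+2$. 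The $2$-clique extension of $C_5$ is the one small graph in which the moment squeezing to two $\lambda$-values succeeds but the resulting cliques fail to assemble into a product structure, and it must be carried along as a genuine exception.

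For part (2), the plan is to reduce to part (1). Under $\theta_3\geqslant -3$, the trace identities $\sum_i m_i\theta_i=-k$ and $\sum_i m_i\theta_i^2=nk-k^2$, together with positivity and integrality of the multiplicities $m_i$, confine $\theta_1,\theta_2$ to a narrow range. A Hoffman/interlacing-style estimate should then produce $\ell\geqslant\tfrac{3}{4}k$ once $k$ is large enough. The threshold $k\geqslant 120$ is tuned so that (a) this estimate goes through, and (b) the $2$-clique extension of $C_5$, which has $k=5$, is excluded; part (1) then finishes the job by identifying $G$ as a grid.

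The main obstacle will be the transition from ``both moments of $\lambda$ are spectrally fixed and $\ell\geqslant\tfrac{3}{4}k$'' to ``$\lambda$ takes exactly two values on edges''. The very existence of the $2$-clique extension of $C_5$ as an exception shows that the quantitative constant $\tfrac{3}{4}$ is essential and that the argument cannot be purely moment-based; one has to extract finer information from the edge identity $T(x,y)=(s_1+2)\lambda(x,y)+\text{const}$, most likely by a careful walk count of length $4$ through each edge, in order to rule out intermediate $\lambda$-values. Once the two-value structure is secured, the reduction to a grid via the $c=2$ clique geometry is comparatively routine.
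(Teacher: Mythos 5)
Your opening move is sound and matches the paper's Lemma 4.6: the Hoffman identity $(A-\theta_1I)(A-\theta_2I)(A-\theta_3I)=\alpha J$ gives walk-regularity, and your $\ell$ agrees with the spectral formula. But you read the identity only on the diagonal and on edges, and you miss the one consequence the whole argument actually runs on: evaluating $(A^3)_{xz}$ at a \emph{non-adjacent} pair $x,z$ shows that $\sum_{y\sim x,\,y\sim z}a_{xy}=\ell$ is a combinatorial constant (the ``strongly co-edge-regular'' property). Your edge identity $T(x,y)=(s_1+2)\lambda(x,y)+\mathrm{const}$ is true but is not what drives the classification. More seriously, the central step of part (1) is left as a hope: you say a ``variance/Cauchy--Schwarz argument'' with the two spectrally fixed moments of $\lambda$ over $N(x)$ ``should'' force $\lambda$ to take only two values on edges. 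Two prescribed moments do not bound the number of values of an integer-valued function, and you yourself flag this as the main obstacle without resolving it. The paper gets the two-value structure differently: from an induced quadrangle $x\sim u\sim y\sim v\sim x$ and the distance-2 identity one gets $a_{xu}+a_{xv}=\ell$ and then $a_{xw}\in\{a_{xu},a_{xv}\}$ for every $w\sim x$ by a purely combinatorial chase (Claim 4.3), after which $c=2$ forces the local graph to be $K_{s+1}\dot\cup K_{t+1}$, but only when $\ell=k-2$; the range $\tfrac34k\leqslant\ell\leqslant k-3$ is killed by a separate counting contradiction (Theorem 4.2). The exceptional $2$-clique extension of $C_5$ does not arise where you place it (cliques ``failing to assemble''); it arises because a graph with $c=2$ and no induced quadrangle is a Terwilliger graph, hence a clique extension of a strongly regular graph (Lemma 2.13), and that case must be treated before any quadrangle argument can start. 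Your sketch has no branch for the quadrangle-free case at all.

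For part (2) your plan is to deduce $\ell\geqslant\tfrac34k$ from $\theta_3\geqslant-3$ and $k\geqslant120$ by a ``Hoffman/interlacing-style estimate'' and then invoke part (1). No such direct spectral implication is established, and the paper does not prove one; instead it proves a nonexistence theorem (Theorem 4.4): a strongly co-edge-regular graph with $c=2$, $\theta_{\min}\geqslant-3$, $k\geqslant120$ and $\ell<\tfrac34k$ cannot exist. That proof is the technical heart of the paper and is entirely structural: one takes a maximum independent set $\{z_1,\dots,z_t\}$ in a local graph, shows $t\leqslant 8$ and then $t=3$ by exhibiting induced subgraphs (cones over unions of cliques, large induced stars) whose smallest eigenvalue is below $-3$, concludes the local graph is $3K_{k/3}$, and derives a contradiction from the integrality of strongly regular multiplicities. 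Nothing in your trace-identity/multiplicity sketch substitutes for this; as written, part (2) of your proposal is an unsupported reduction. So the proposal shares the paper's starting point but has genuine gaps at both of its load-bearing steps.
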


Now we introduce a class of co-edge-regular graphs that gives a combinatorial generalization of co-edge-regular graphs with exactly four distinct eigenvalues. Let $G$ be a graph.
Let $a_{xy}$ denote the number of common neighbours of two adjacent vertices $x$ and $y$ in $G$. A \emph{strongly co-edge-regular graph} $G$ with parameters $(n,k,c,\ell)$ is a co-edge-regular graph with parameters $(n,k,c)$ such that for any two distinct non-adjacent vertices $x$ and $z$, $\sum_{y\sim x,y\sim z} a_{xy}=\ell$ holds.
Note that there are many strongly co-edge-regular graphs, for example, the complement of a distance-regular graph of diameter at least $2$ is strongly co-edge-regular.

A co-edge-regular graph with exactly four distinct eigenvalues is strongly co-edge-regular and walk-regular, as we will show in Section \ref{4.2}.

We will use the concept of strongly co-edge-regular graphs to show the following two results.

\begin{theorem}\label{grid}
Let $G$ be a walk-regular and strongly co-edge-regular graph with parameters $(n,k$, $2,\ell)$. If $\ell\geqslant \frac{3}{4}k$, then $G$ is a $p\times q$-grid, where $p+q=k+2$ and $\ell = k-2$ or the $2$-clique extension of $C_5$.
\end{theorem}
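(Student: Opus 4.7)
The plan is to analyse the local graph $\Delta(x)$ (the subgraph of $G$ induced on $N(x)$) at an arbitrary vertex $x$, and to show that under the hypotheses $c=2$, $\ell\geq\frac{3}{4}k$, walk-regularity, and strongly co-edge-regularity, $\Delta(x)$ is forced into one of two shapes: either a disjoint union of two cliques (yielding the grid conclusion) or the join $K_{1}\vee(K_{2}\cup K_{2})$ (yielding the $2$-clique extension of $C_{5}$). The global reconstruction will then follow from a standard ``locally-grid'' argument.

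First, walk-regularity guarantees that the number of triangles $T$ through any vertex is a constant, equivalently $\sum_{y\in N(x)}a_{xy}=2T$ is independent of $x$. Double-counting the strongly co-edge-regular equation $\sum_{y\sim x,\,y\sim z}a_{xy}=\ell$ as $z$ ranges over $V(G)\setminus(N(x)\cup\{x\})$, and swapping the order of summation, gives
\[
\ell(n-1-k)\;=\;\sum_{y\in N(x)}a_{xy}(k-1-a_{xy}),
\]
so $\sum_{y\in N(x)}a_{xy}^{2}=2T(k-1)-\ell(n-1-k)$ is also constant, independent of $x$.

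Next, classify every non-adjacent pair $(x,z)$ according to the adjacency of its two common neighbours $\{y_{1},y_{2}\}$: call $(x,z)$ of \emph{type II} if $y_{1}\not\sim y_{2}$, and of \emph{type I} if $y_{1}\sim y_{2}$. For a type-II pair, $\{y_{1},y_{2}\}$ is itself a non-adjacent pair whose common neighbours are precisely $\{x,z\}$; applying the $\ell$-equation to both $(x,z)$ and $(y_{1},y_{2})$ yields the key relations $a_{xy_{1}}=a_{zy_{2}}$ and $a_{xy_{2}}=a_{zy_{1}}$. Equivalently, type-II non-edges in $\Delta(x)$ are those whose second common neighbour in $G$ lies outside $N(x)$, while type-I non-edges are those lying in an induced path $y_{1}\text{-}z\text{-}y_{2}$ (a ``cherry'') contained inside $\Delta(x)$.

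The main obstacle is to show that $\ell\geq\frac{3}{4}k$ forces a global dichotomy: either all non-adjacent pairs are of type~II (the ``grid regime'') or all are of type~I (the ``$C_{5}$ regime''). This is the heart of the argument; it requires combining the key relations $a_{xy_{1}}=a_{zy_{2}}$ for type-II pairs, the constant value of $\sum_{y\in N(x)}a_{xy}^{2}$ from Step~1, and the slack provided by $\ell\geq\frac{3}{4}k$ to derive a contradiction from any mixed configuration. Once pure type~II is established, $\Delta(x)$ has no induced $P_{3}$ and is therefore a disjoint union of cliques; the $c=2$ condition combined with the equalities $a_{xy_{1}}=a_{zy_{2}}$ forces exactly two cliques of sizes $p-1$ and $q-1$ with $p+q=k+2$, and a standard locally-grid characterization identifies $G$ with the $p\times q$-grid (so automatically $\ell=(p-2)+(q-2)=k-2$). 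In the pure type-I case, every non-edge of $\Delta(x)$ is covered by a cherry inside $\Delta(x)$; the uniqueness dictated by $c=2$, together with the identity for $\sum_{y}a_{xy}^{2}$, forces $k$ to be small, and the unique graph compatible with $\ell\geq\frac{3}{4}k$ turns out to be the $2$-clique extension of $C_{5}$.
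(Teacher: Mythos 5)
Your outline assembles several ingredients that do appear in the paper's argument: the constancy of $\sum_{y\sim x}a_{xy}$ and $\sum_{y\sim x}a_{xy}^2$ from walk-regularity (Lemma \ref{twoconstants}), and the relation $a_{xy_1}=a_{zy_2}$ for an induced quadrangle, which is exactly the identity $a_{yu}=\ell-a_{xu}$ driving the proof of Theorem \ref{l=k-2}. But the proof has a genuine gap at precisely the point you flag as ``the heart of the argument.'' You assert that $\ell\geqslant\frac{3}{4}k$ forces \emph{every} non-adjacent pair of $G$ to be of the same type, and you give no argument for it. This global rigidity statement is stronger than anything the paper establishes as an intermediate step: the paper only uses the trivial dichotomy ``$G$ contains an induced quadrangle or it does not'' (i.e.\ at least one type-II pair exists, or none does), and then does substantial work in each branch --- Theorem \ref{atleast3/4k} rules out $\frac{3k}{4}\leqslant\ell\leqslant k-3$ in the presence of a single quadrangle by a careful counting argument around that quadrangle, and Theorem \ref{l=k-2} propagates the local structure $K_{s+1}\dot\cup K_{t+1}$ from one quadrangle to every vertex. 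Deriving a contradiction ``from any mixed configuration'' directly is not obviously easier than this, and nothing in your sketch indicates how the sum-of-squares identity would accomplish it.

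The quadrangle-free branch is also not actually handled. If no non-adjacent pair is of type II, then $G$ is a Terwilliger graph, and the claim that ``$c=2$ together with the identity for $\sum_y a_{xy}^2$ forces $k$ to be small'' is unsubstantiated: the paper needs the classification of connected co-edge-regular Terwilliger graphs as $s$-clique extensions of strongly regular graphs (Lemma \ref{s-clique}) together with the nontrivial valency bound of Proposition \ref{k bound} to reduce to the pentagon or the Petersen graph, and then the bound $\ell\geqslant\frac{2}{7}k$ (a fortiori $\frac{3}{4}k$) to finish. Two smaller points in the type-II endgame: ``exactly two cliques'' does not follow from $c=2$ and the equalities alone --- three equal cliques give $\ell=\frac{2k}{3}-2$, which is only excluded by the hypothesis $\ell\geqslant\frac{3}{4}k$ (compare the local structure $3K_{k/3}$ arising in Theorem \ref{l<3k/4}) --- and the balanced case $p=q$ needs an argument ruling out the Shrikhande graph, which satisfies $\ell=k-2$ but fails $\ell\geqslant\frac{3}{4}k$.
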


When we moreover assume that the smallest eigenvalue is at least $-3$, we can remove the bound on $\ell$.

\begin{theorem}\label{main1}
Let $G$ be a walk-regular and strongly co-edge-regular graph with parameters $(n,k$, $2,\ell)$ with smallest eigenvalue $\theta_{\min}$ at least $-3$. If $k\geqslant 120$, then $G$ is a $p\times q$-grid, where $p+q=k+2$ and $\ell = k-2$.
\end{theorem}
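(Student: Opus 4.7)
The plan is to derive Theorem \ref{main1} as a consequence of Theorem \ref{grid}: I will show that under the hypotheses $\theta_{\min}(G) \geq -3$ and $k \geq 120$ one must have $\ell \geq \tfrac{3}{4} k$, at which point Theorem \ref{grid} applies and yields that $G$ is either a $p \times q$-grid with $p+q = k+2$ and $\ell = k-2$, or the $2$-clique extension of $C_5$. Since the latter has $k = 5$, the hypothesis $k \geq 120$ rules it out, forcing $G$ to be a $p \times q$-grid with the stated parameters.

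The main analytic input is an identity expressing $\ell$ in terms of local quantities. Since $G$ is walk-regular, the number of triangles through each vertex is a constant $t$, and a short double count of ordered triples $(x, z, y)$ with $x \nsim z$ and $y \in N(x) \cap N(z)$ (using $c = 2$) forces $t = (k^2 + k + 2 - 2n)/2$. Walk-regularity also makes the second moment $T := \sum_{y \sim x} a_{xy}^2$ constant, coming from $(A^4)_{xx}$. Applying the strong co-edge-regular condition $\sum_{y \in N(x) \cap N(z)} a_{xy} = \ell$ and double-counting the same triples weighted by $a_{xy}$ yields
\[
\ell\,(n-1-k) \;=\; 2(k-1)\,t \;-\; T,
\]
so an upper bound on $T$ translates directly into a lower bound on $\ell$.

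The hypothesis $\theta_{\min}(G) \geq -3$ provides the structural input needed to bound $T$. By Cauchy interlacing, the induced subgraph on each neighbourhood $N(x)$ also has smallest eigenvalue at least $-3$. Since an induced $K_{1,m}$ has smallest eigenvalue $-\sqrt{m}$, the local graph $N(x)$ contains no induced $K_{1,10}$, whence $\alpha(N(x)) \leq 9$; further forbidden small induced subgraphs with smallest eigenvalue below $-3$ impose additional restrictions. These constraints keep the local valencies $(a_{xy})_{y \sim x}$ relatively concentrated around the average $2t/k$, so that $T$ cannot be much larger than its Cauchy--Schwarz lower bound $4t^2/k$. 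Combining such an upper bound on $T$ with the identity above and with $t = (k^2 + k + 2 - 2n)/2$ yields $\ell \geq \tfrac{3}{4} k$ provided $k$ exceeds the explicit threshold $120$.

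The main obstacle will be this last, quantitative step: converting the structural consequences of $\theta_{\min}(G) \geq -3$ (bounded $\alpha(N(x))$, forbidden small induced subgraphs, and possibly an appeal to classification results for regular graphs with smallest eigenvalue in $[-3,-2]$) into a sharp enough upper bound on $T$ to reach $\tfrac{3}{4} k$. The explicit constant $120$ most likely enters here as the break-even point at which the inequality first becomes universal. Once $\ell \geq \tfrac{3}{4} k$ has been established, the conclusion is a direct invocation of Theorem \ref{grid} combined with the trivial numerical exclusion of the $2$-clique extension of $C_5$.
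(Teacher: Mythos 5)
Your high-level plan coincides with the paper's: show that $\ell < \tfrac{3}{4}k$ is impossible under the hypotheses, then invoke Theorem \ref{grid} (the $2$-clique extension of $C_5$ has $k=5$, so it is excluded). Your counting identities are also correct: with $t$ the number of triangles on a vertex, one indeed has $k(k-1)=2t+2(n-1-k)$ and, by double counting the triples $(x,z,y)$ weighted by $a_{xy}$, $\ell(n-1-k)=(k-1)\sum_{y\sim x}a_{xy}-\sum_{y\sim x}a_{xy}^2 = 2(k-1)t-T$.

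However, the mechanism you propose for reaching $\ell\geqslant\tfrac34 k$ --- bounding $T$ from above by a concentration argument around the mean $2t/k$ --- cannot work, and this is a genuine gap rather than a technical obstacle. If the local degrees are perfectly concentrated, i.e.\ $T=4t^2/k$, your identity gives exactly $\ell = 4t/k = 2\bar a$ where $\bar a$ is the average of the $a_{xy}$; so even in the best possible case your method only yields $\ell\geqslant\tfrac34k$ when $\bar a\geqslant \tfrac38 k$, and nothing in the hypotheses forces the local degrees to be that large. The critical configuration is precisely the one the paper must work hardest to exclude: a vertex whose local graph is $3K_{k/3}$. There all $a_{xy}=\tfrac{k}{3}-1$ are \emph{equal}, so $T$ attains its Cauchy--Schwarz minimum and no second-moment inequality can see anything wrong, yet $\ell=\tfrac{2k-6}{3}<\tfrac34 k$. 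The paper's Theorem \ref{l<3k/4} handles this by a long structural argument: using Lemma \ref{forbiddensubgraphs} to pin the independence number of every local graph down to $3$, then showing the local graph must be $3K_{k/3}$, concluding $G$ would be strongly regular with parameters $(\tfrac{k^2+3k+3}{3},k,\tfrac{k-3}{3},2)$, and finally deriving a contradiction from the integrality of the eigenvalue multiplicities (which forces $\tfrac{k^2+18k+9}{9}$ to be a perfect square, hence $k=0$). That arithmetic feasibility step has no analogue in your moment-based approach, so the reduction to $\ell\geqslant\tfrac34k$ does not go through as proposed.
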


\begin{remark}
\begin{enumerate}
\item The $2$-clique extension of the pentagon $C_5$ is a co-edge-regular graph with parameters $(10,5,2)$ with exactly four distinct eigenvalues and smallest eigenvalue $-\sqrt{5}$.
\item The $2$-clique extension of the Petersen graph is co-edge-regular with parameters $(20,7,2)$ with exactly four distinct eigenvalues and smallest eigenvalue $-3$.
\end{enumerate}
\end{remark}

Theorem \ref{fourev} follows directly from Theorem \ref{grid} and \ref{main1}, as a co-edge-regular graph with four distinct eigenvalues is walk regular and strongly co-edge-regular graph.

This paper is organized as follows. In Section \ref{Pre} we give preliminaries. In Section \ref{section3} we give some results on co-edge-regular and strongly co-edge-regular graphs. We show Theorem \ref{main1} and Theorem \ref{fourev} in Section \ref{MainResults}.

\section{Preliminaries}\label{Pre}

\subsection{Graphs}

A graph $G$ is an ordered pair $(V(G),E(G))$, where $V(G)$ is a finite set and $\displaystyle E(G)\subseteq \binom{V(G)}{2}$. The set $V(G)$ (resp.\ $E(G)$) is called the \emph{vertex set} (resp. \emph{edge set}) of $G$. A \emph{subgraph} of a graph $G$ is a graph $H$ such that $V(H)\subseteq V(G)$ and $E(H)\subseteq E(X)$. If $E(H)=\binom{V(H)}{2}\cap E(G)$, then we say $H$ is an \emph{induced subgraph} of $G$. The disjoint union of the graphs $G_1$ and $G_2$ is denoted by $G_1\dot\cup G_2$. If $\{x,y\}$ is an edge in $E$, then we say the vertices $x,y$ are adjacent, denoted by $x\sim y$, and otherwise, we say that $x,y$ are not adjacent, denoted by $x\not\sim y$ . The \emph{complement} $\overline{G}$ of a graph $G$ has the same vertex set as $G$, where distinct vertices $x$ and $y$ are adjacent in $\overline{G}$ if and only if they are not adjacent in $G$. The \emph{adjacency matrix} of $G$, denoted by $A(G)$, is a symmetric $(0,1)$-matrix indexed by $V(G)$, such that $(A(G))_{xy}=1$ if and only if $x\sim y$. The \emph{eigenvalues} of $G$ are the eigenvalues of $A(G)$. The
\emph{spectrum} of a graph $G$ is the list of the eigenvalues of $A(G)$ together with their multiplicities.

Let $G$ be a graph. For a vertex $x\in V(G)$, we denote by $N_G(x)$ the set of the neighbours of $x$ in $G$, and call the subgraph induced on $N_G(x)$ the \emph{local graph} of $x$ in $G$. We denote by $N_G(x,y)$ the set of common neighbours of $x,y$ in $G$. We write $a_{xy}$ for the cardinality of $N_G(x,y)$, if $x,y$ are adjacent. A graph $G$ is called bipartite if its vertex set can be partitioned into two parts $V_1$ and $V_2$ such that every edge has one end in $V_1$ and one in $V_2$. If every vertex in $V_1$ is adjacent to all vertices in $V_2$, then we say $G$ is a complete bipartite graph, and also denoted by $K_{|V_1|,|V_2|}$. A graph is complete, if every pair of distinct vertices is adjacent. We call a subgraph is a \emph{clique}, if it is isomorphic to a complete graph. We say a clique with $s$ vertices an $s$-clique. The cardinality of a maximum clique in a graph $G$ is called the \emph{clique number} of $G$, and is denoted by $\omega(G)$. A graph $G$ is called \emph{$k$-regular} if every vertex in $G$ has $k$ neighbours.
\begin{definition}
Let $G$ be a $k$-regular graph on $n$ vertices that is neither complete nor empty. Then $G$ is said to be
\begin{enumerate}
  \item \emph{co-edge-regular} with parameters $(n,k,c)$, if any pair of distinct non-adjacent vertices have $c$ common neighbours.
  \item \emph{strongly regular} with parameters $(n, k, a, c)$, if any two adjacent vertices have $a$ common neighbours and any pair of distinct non-adjacent vertices have $c$ common neighbours.
   \item \emph{walk-regular}, if for all nonnegative integers $r$, all the diagonal entries of $A^r$ are the same, where $A$ is the adjacency matrix of $G$.
\end{enumerate}
\end{definition}

For a positive integer $s$, the \emph{$s$-clique extension} of a graph $G$ is the graph $\tilde{G}$ obtained from $G$ by replacing each vertex $x \in V(G)$ by a clique $\tilde{X}$ with s
vertices, such that $\tilde{x} \sim \tilde{y}$ (for $\tilde{x} \in \tilde{X}, \tilde{y} \in \tilde{Y}$) in $\tilde{G}$ if and only if $ x \sim y$ in $G$. If $\tilde{G}$ is the $s$-clique extension of $G$,
then $\tilde{G}$ has adjacency matrix $\mathbf{J}_s \otimes (A(G) + \mathbf{I}_n) - \mathbf{I}_{sn}$, where $\mathbf{I}$ is the identity matrix and $\mathbf{J}$ is the all-ones matrix. If $G$
has spectrum $\{\theta_0^{m_0}, \theta_1^{m_1}, \ldots , \theta_t^{m_t} \}$, then the spectrum of $\tilde{G}$ is
\begin{equation*}
  \{ (s(\theta_0 +1)-1)^{m_0}, (s(\theta_1 +1)-1)^{m_1}, \ldots , (s(\theta_t+1)-1)^{m_t}, (-1)^{(s-1)(m_0 +m_1 + \cdots + m_t)} \}.
\end{equation*}

\subsection{Interlacing}

If $M$ (resp. $N$) is a real symmetric $m\times m$ (resp. $n\times n$) matrix, let $\eta_1(M)\geqslant\eta_2(M)\geqslant\cdots\geqslant\eta_m(M)$ (resp. $\eta_1(N)\geqslant\eta_2(N)\geqslant\cdots\geqslant\eta_n(N)$) denote the eigenvalues of $M$ (resp. $N$) in nonincreasing order. Assume $m\leqslant n$. We say that the eigenvalues of $M$ \emph{interlace} the eigenvalues of $N$, if $\eta_{n-m+i}(N)\leqslant\eta_i(M)\leqslant\eta_i(N)$ for each $i=1,\ldots,m$. The following result is a special case of interlacing.


\begin{lemma}[{cf.\cite[Theorem 9.1.1]{GD01}}]\label{interlacing}
Let $B$ be a real symmetric $n\times n$ matrix and $C$ a principal submatrix of $B$ of order $m$, where $m<n$. Then the eigenvalues of $C$ interlace the eigenvalues of $B$.
\end{lemma}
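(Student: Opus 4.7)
The plan is to prove this via the Courant--Fischer min-max characterization of eigenvalues. Recall that for a real symmetric $N\times N$ matrix $M$ and each $i\in\{1,\ldots,N\}$, one has the two equivalent expressions
\begin{equation*}
\eta_i(M)=\max_{\substack{V\subseteq\mathbb{R}^N\\\dim V=i}}\min_{0\neq v\in V}\frac{v^{\top} M v}{v^{\top} v}=\min_{\substack{V\subseteq\mathbb{R}^N\\\dim V=N-i+1}}\max_{0\neq v\in V}\frac{v^{\top} M v}{v^{\top} v}.
\end{equation*}
My first step is to encode the principal-submatrix relation algebraically: write $C=P^{\top}BP$, where $P$ is the $n\times m$ matrix obtained by retaining those columns of the $n\times n$ identity whose indices correspond to the rows and columns of $B$ kept in $C$. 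Then $P$ has orthonormal columns, so the map $v\mapsto Pv$ is a linear isometry $\mathbb{R}^m\to\mathbb{R}^n$, and the Rayleigh quotients are preserved, since $v^{\top}Cv=(Pv)^{\top}B(Pv)$ and $\|Pv\|=\|v\|$ for every $v\in\mathbb{R}^m$.

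Next I would deduce the two interlacing inequalities in parallel. For the upper bound $\eta_i(C)\leqslant \eta_i(B)$, take an $i$-dimensional subspace $V\subseteq\mathbb{R}^m$ that achieves the max-min for $\eta_i(C)$; its image $PV$ is an $i$-dimensional subspace of $\mathbb{R}^n$, and the equality of Rayleigh quotients gives $\min_{0\neq u\in PV}(u^{\top}Bu)/(u^{\top}u)=\eta_i(C)$, so the max-min formula applied to $B$ yields $\eta_i(B)\geqslant \eta_i(C)$. For the lower bound $\eta_{n-m+i}(B)\leqslant\eta_i(C)$, I would instead use the dual (min-max) form: take a subspace $V\subseteq\mathbb{R}^m$ of dimension $m-i+1$ achieving the min-max for $\eta_i(C)$. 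Then $PV\subseteq\mathbb{R}^n$ has dimension $m-i+1=n-(n-m+i)+1$, so the min-max formula for $\eta_{n-m+i}(B)$ applied to $PV$ produces $\eta_{n-m+i}(B)\leqslant \max_{0\neq u\in PV}(u^{\top}Bu)/(u^{\top}u)=\eta_i(C)$.

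There is no serious obstacle here; the only point that deserves care is verifying that lifting a test subspace from $\mathbb{R}^m$ to $\mathbb{R}^n$ via $P$ preserves both its dimension and the values of all Rayleigh quotients, so that the Courant--Fischer extrema on the two sides are genuinely comparable. Once this bookkeeping is in place, the two interlacing inequalities fall out symmetrically from the two forms of Courant--Fischer, with no further input required.
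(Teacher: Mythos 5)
Your proof is correct. Note that the paper itself gives no proof of this lemma: it is quoted directly from Godsil and Royle (Theorem 9.1.1), so there is no internal argument to compare against. Your Courant--Fischer argument is the standard one and is complete: writing $C=P^{\top}BP$ with $P^{\top}P=\mathbf{I}_m$ makes $v\mapsto Pv$ an isometry that preserves Rayleigh quotients and subspace dimensions, the max-min form gives $\eta_i(C)\leqslant\eta_i(B)$, and the dual min-max form with the index check $n-(n-m+i)+1=m-i+1$ gives $\eta_{n-m+i}(B)\leqslant\eta_i(C)$, which together are exactly the interlacing inequalities as defined in Section 2.2.
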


As an easy consequence of Lemma \ref{interlacing}, we have the following proposition.
\begin{proposition}\label{subgraph}
Let $G$ be a graph and $H$ a proper induced subgraph of $G$. Denote by $\theta_{\min}(G)$ (resp. $\theta_{\min}(H)$) the smallest eigenvalue of $G$ (resp. $H$). Then $\theta_{\min}(G)\leqslant \theta_{\min}(H)$.
\end{proposition}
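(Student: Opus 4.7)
The plan is to deduce this directly from Lemma \ref{interlacing} by identifying the two relevant smallest eigenvalues as the bottom entries of the interlacing inequalities. First I would set $n := |V(G)|$ and $m := |V(H)|$, noting that $m < n$ because $H$ is a proper induced subgraph of $G$. The key observation is that since $H$ is \emph{induced}, its adjacency matrix $A(H)$ is exactly the principal submatrix of $A(G)$ obtained by restricting rows and columns to $V(H) \subseteq V(G)$; this is the only place the inducedness hypothesis is used, but it is essential, since for an arbitrary subgraph $A(H)$ need not be a submatrix of $A(G)$.

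Next I would apply Lemma \ref{interlacing} with $B = A(G)$ and $C = A(H)$ to conclude that the eigenvalues of $A(H)$ interlace those of $A(G)$, i.e.
\[
\eta_{n-m+i}(A(G)) \leqslant \eta_i(A(H)) \leqslant \eta_i(A(G)) \qquad \text{for } i = 1, \ldots, m.
\]
Taking $i = m$ in the left-hand inequality gives $\eta_n(A(G)) \leqslant \eta_m(A(H))$. Since $\eta_n(A(G)) = \theta_{\min}(G)$ and $\eta_m(A(H)) = \theta_{\min}(H)$ by the definitions of the eigenvalue orderings, this is exactly the desired inequality $\theta_{\min}(G) \leqslant \theta_{\min}(H)$.

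There is essentially no obstacle here: the only subtlety worth flagging is that one must invoke the induced subgraph hypothesis at the step where we claim $A(H)$ is a principal submatrix of $A(G)$, and one must be careful with the indexing convention (the eigenvalues are listed in \emph{nonincreasing} order, so the smallest eigenvalue is the last one in each list, namely $\eta_n$ for $A(G)$ and $\eta_m$ for $A(H)$). With that convention fixed, the bottom end of the interlacing chain yields the result in a single line.
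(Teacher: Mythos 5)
Your proof is correct and is exactly the argument the paper intends: the paper states the proposition as an immediate consequence of Lemma \ref{interlacing} without writing out details, and your application of interlacing with $B=A(G)$, $C=A(H)$ and $i=m$ is the standard one-line derivation. Nothing to add.
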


Let $G=(V,E)$ be a graph and $\pi:=\{V_1,\ldots,V_r\}$ be a partition of $V$. We say $\pi$ is an \emph{equitable partition} with respect to $G$ if the number of neighbours in $V_j$ of a vertex $u$ in $V_i$ is a constant $q_{ij}$, independent of $u$. For an equitable partition $\pi$ with respect to $G$, the quotient matrix $Q$ of $G$ with respect to $\pi$ is defined as $Q=(q_{ij})_{1\leqslant i,j\leqslant r}$.

\begin{lemma}[{cf.\cite[Theorem 9.3.3]{GD01}}]\label{quotientmatrix}
Let $G$ be a graph. If $\pi$ is an equitable partition of $G$ and $Q$ is the quotient matrix with respect to $\pi$ of $G$, then every eigenvalue of $Q$ is an eigenvalue of $G$.
\end{lemma}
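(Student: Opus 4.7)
The plan is to exhibit, for every eigenvalue $\lambda$ of $Q$, an explicit eigenvector of the adjacency matrix $A=A(G)$ with the same eigenvalue $\lambda$. The natural bridge between the two matrices is the \emph{characteristic matrix} of the partition, and the whole argument rests on a single intertwining identity between $A$ and $Q$.

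First I would introduce the $n\times r$ characteristic matrix $P=(P_{x,j})$ of $\pi=\{V_1,\ldots,V_r\}$, defined by $P_{x,j}=1$ if $x\in V_j$ and $P_{x,j}=0$ otherwise. The columns of $P$ are the indicator vectors of the parts $V_1,\ldots,V_r$; since these parts are nonempty and pairwise disjoint, the columns are linearly independent, so $P$ has full column rank $r$. In particular, $Pv\neq 0$ whenever $v\in\mathbb{R}^r$ is nonzero.

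The key step is to prove the matrix identity
\begin{equation*}
AP \;=\; PQ.
\end{equation*}
This is a direct computation: for $x\in V_i$, the $(x,j)$-entry of $AP$ equals $|N_G(x)\cap V_j|$, which is exactly $q_{ij}$ by the definition of an equitable partition, independent of the chosen representative $x\in V_i$. On the other hand, $(PQ)_{x,j}=\sum_\ell P_{x,\ell}q_{\ell j}=q_{ij}$, since $P_{x,\ell}=\delta_{i\ell}$. Hence the two matrices agree entrywise.

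Given $AP=PQ$, the conclusion follows at once: if $Qv=\lambda v$ with $v\neq 0$, then $A(Pv)=P(Qv)=\lambda(Pv)$, and $Pv\neq 0$ by the full column rank of $P$, so $\lambda$ is an eigenvalue of $A$, hence of $G$. There is no real obstacle here; the only subtle point is making sure the equitable condition is used in exactly the right place, namely to guarantee that $|N_G(x)\cap V_j|$ depends only on the part $V_i$ containing $x$, which is precisely what makes $AP$ factor through the partition as $PQ$.
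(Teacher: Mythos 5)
Your proof is correct and is the standard intertwining argument ($AP=PQ$ via the characteristic matrix, plus full column rank of $P$) that the cited reference \cite[Theorem 9.3.3]{GD01} uses; the paper itself states this lemma without proof, simply citing Godsil and Royle. Nothing to add.
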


For a graph $G$, let $C(G)$ be the \emph{cone} of $G$, that is, $C(G)$ is obtained by adding a vertex to $G$ and joining it to all vertices of $G$.

\begin{lemma}\label{forbiddensubgraphs}
Let $G$ be a graph with smallest eigenvalue at least $-3$. Then none of the following graphs is an induced subgraph of $G$.
\begin{enumerate}
  \item Connected bipartite graphs with order at least $11$ and containing an induced $K_{1,9}$;
  \item Graphs $C(2K_s\dot\cup tK_1)$, where $(s+2)(t-3)>12$;
  \item Graphs $C(2K_{15}\dot\cup K_3\dot\cup 2K_1)$, $C(2K_{21}\dot\cup K_{11}\dot\cup K_1)$, 
  $C(C(2K_{13})\dot\cup K_{13})$, $C(C(3K_5))$.
\end{enumerate}
\end{lemma}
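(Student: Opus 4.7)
By Proposition \ref{subgraph}, to forbid each listed graph $H$ as an induced subgraph of $G$ it is enough to prove $\theta_{\min}(H)<-3$. For each cone built from cliques and isolated vertices that appears in (2) and (3), the natural equitable partition (cone vertex(es), together with each clique-cell and each independent-set cell) contributes only the internal eigenvalues $-1$ (on every $K_s$-cell, since $A(K_s)=J-I$) and $0$ (on $tK_1$-cells), both strictly larger than $-3$. Hence $\theta_{\min}(H)$ must be attained by the quotient matrix, and by Lemma \ref{quotientmatrix} it suffices to exhibit an eigenvalue of the quotient that is smaller than $-3$.

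Part (1) is handled by Perron--Frobenius alone. The star $K_{1,9}$ has largest eigenvalue $3$. If $H$ is connected on at least $11$ vertices and contains an induced $K_{1,9}$, then $K_{1,9}$ is a vertex-proper induced subgraph of $H$; since $A(H)$ is irreducible, the strict form of Perron--Frobenius forces the spectral radius of $H$ to exceed $3$, and bipartiteness then gives $\theta_{\min}(H)=-\rho(H)<-3$. Part (2) uses the equitable partition of $C(2K_s\dot\cup tK_1)$ into four cells. The $S_2$-symmetry swapping the two $K_s$'s splits the quotient into a one-dimensional antisymmetric summand (eigenvalue $s-1$) and a three-dimensional symmetric summand $Q'$ with characteristic polynomial
\[ p(\lambda)=\lambda^3-(s-1)\lambda^2-(2s+t)\lambda+(s-1)t. \]
A direct computation yields $p(-3)=(s+2)(t-3)-12$, which is positive under the hypothesis. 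Since $p(\lambda)\to-\infty$ as $\lambda\to-\infty$, $p$ has a real root strictly below $-3$, which by the reduction above proves $\theta_{\min}(H)<-3$.

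Part (3) is the same recipe, one cone at a time. For each of the four graphs, write down the equitable partition whose cells are the cone vertex(es) and each maximal clique or isolated-vertex block, then use the evident $S_2$-symmetry (swapping the two equal-sized cliques in the first three graphs) or the $S_3$-symmetry (permuting the three $K_5$'s in $C(C(3K_5))$) to collapse the quotient onto a symmetric-part matrix $Q'$ of size $3$ or $4$. The antisymmetric eigenvalues of the full quotient are manifestly positive (equal to $s-1$ in the first three cases and to $4$ in the last), hence much larger than $-3$, so it remains only to evaluate the characteristic polynomial of $Q'$ at $\lambda=-3$ and verify that its sign forces a real root strictly below $-3$. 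I expect this sign check to be the only delicate step: four separate routine determinants, each of which must land on the correct side of zero. Conceptually nothing new is happening beyond interlacing, the quotient-matrix lemma, and the Perron--Frobenius step used in (1); the graphs in (3) are precisely the residual small cases that fall outside the two-parameter family in (2).
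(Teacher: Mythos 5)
Your proof is correct and follows essentially the same route as the paper: Perron--Frobenius for part (i), and for parts (ii)--(iii) the quotient matrix of an equitable partition into cone vertex, clique cells and isolated-vertex cell, whose behaviour at $-3$ (your $p(-3)=(s+2)(t-3)-12$ is exactly the paper's $-\det(Q+3\mathbf{I})$) forces an eigenvalue below $-3$, ruled out via Lemma \ref{quotientmatrix} and interlacing. Like the paper, you leave the four determinant evaluations in (iii) unperformed, but they all land on the correct side of zero (for instance $\det(Q+3\mathbf{I})=-4$ for $C(C(3K_5))$ and $-8$ for $C(2K_{15}\dot\cup K_3\dot\cup 2K_1)$), so nothing is at risk.
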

\begin{proof}
Let $G$ be a graph with smallest eigenvalue at least $-3$.

 $(\mathrm{i})$ Let $B$ be a connected bipartite graph with order $n\geqslant11$. Assume that $B$ contains an induced $K_{1,9}$. Denote by $\theta_{\max}(B)$ the largest eigenvalue and $\theta_{\min}(B)$ the smallest eigenvalue of $B$. By the Perron-Frobenius Theorem \cite[Theorem 3.1.1]{BCN}, we have $\theta_{\max}(B)>3$, as the largest eigenvalue of $K_{1,9}$ is $3$. Since $B$ is a bipartite graph, we obtain $\theta_{\min}(B)=-\theta_{\max}(B)<-3$. It follows by Lemma \ref{interlacing} that $G$ does not contain $B$ as an induced subgraph.

 $(\mathrm{ii})$ Assume that $G$ contains $C(2K_s\dot\cup tK_1)$, say $H$, as an induced subgraph for some integers $s,t$. By Lemma \ref{interlacing}, we have the smallest eigenvalue of $H$ is at least $-3$. Let $u$ be the vertex of valency $2s+t$ in $H$. Let $V_1$ be the set of vertices of valency $s$ in $H$ and $V_2=V(H)-\{u\}-V_1$. Consider a partition $\pi=\{\{u\}, V_1, V_2\}$ of $H$. The partition $\pi$ is equitable with quotient matrix $Q$:
\begin{gather*}
Q=\begin{pmatrix}
0 & 2s & t \\ 1 & s-1 & 0 \\ 1 & 0 & 0
\end{pmatrix}.
\end{gather*}
Note that $\det(Q+3\mathbf{I})=-(s+2)(t-3)+12$. By Lemma \ref{quotientmatrix}, we see that the smallest eigenvalue of $Q$ is at least $-3$. Hence, we have $(s+2)(t-3)\leqslant 12$, as $\det(Q+3\mathbf{I})\geqslant0$. This shows $G$ does not contain $C(2K_s\dot\cup tK_1)$ as an induced subgraph for $(s+2)(t-3)>12$. 

$(\mathrm{iii})$ By using a similar method as in the proof for $(\mathrm{ii})$, we obtain $(\mathrm{iii})$.
\end{proof}

\subsection{Strongly regular graphs}
A strongly regular graph $G$ with at least $2$ vertices is called \emph{primitive} if both $G$ and its complement are connected. Note that, if $G$ is primitive strongly regular with parameters $(n,k,a,c)$, then $0<c<k$.
A \emph{conference graph} is a strongly regular graph with parameters $(4c+1,2c,c-1,c)$, where $c$ is a positive integer.

\begin{lemma}[{cf.\cite[Lemma 1.2]{Neumaier.1979}}]\label{conferencegraph}
Let $G$ be a strongly regular graph with parameters $(n,k,a,c)$ and eigenvalues $k>\theta_1>\theta_2$. Then $G$ is a conference graph or both $\theta_1,\theta_2$ are integers.
\end{lemma}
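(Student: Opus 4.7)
The plan is to exploit the fact that for a strongly regular graph with parameters $(n,k,a,c)$, the non-trivial eigenvalues $\theta_1,\theta_2$ are the roots of the quadratic $x^2-(a-c)x+(c-k)=0$, so both are algebraic integers of degree at most $2$ over $\mathbb{Q}$. Either the discriminant $(a-c)^2+4(k-c)$ is a perfect square, in which case $\theta_1,\theta_2\in\mathbb{Q}$, and being algebraic integers they are in fact rational integers; or the discriminant is not a square, in which case $\theta_1,\theta_2$ are algebraic conjugates over $\mathbb{Q}$.

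In the latter case, the goal is to show that $G$ must be a conference graph. First I would use the standard multiplicity formulas for a strongly regular graph, namely
\[
m_1+m_2=n-1, \qquad k+m_1\theta_1+m_2\theta_2=0,
\]
which yield
\[
m_{1,2}=\tfrac{1}{2}\!\left[(n-1)\mp\frac{(n-1)(a-c)+2k}{\theta_1-\theta_2}\right].
\]
Since $\theta_1-\theta_2$ is irrational while $m_1,m_2$ are integers, the numerator $(n-1)(a-c)+2k$ must vanish, forcing $m_1=m_2=(n-1)/2$ (so $n$ is odd) and
\[
(n-1)(c-a)=2k.
\]

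Next I would feed this relation into the fundamental identity $k(k-a-1)=(n-k-1)c$. Writing $d:=c-a\geqslant 1$ and eliminating $n$ via $n-1=2k/d$, a short computation would reduce the identity to
\[
c=\frac{d(k+d-1)}{2}.
\]
Now combining this with the trivial bound $c\leqslant k$ (since $c$ counts common neighbours) forces $d(k+d-1)\leqslant 2k$; if $d\geqslant 2$ this gives $d^2-d\leqslant 0$, a contradiction, so $d=1$. Substituting back yields $c=k/2$, so $k=2t$ is even, $a=t-1$, and $n=4t+1$, i.e.\ $G$ has conference parameters $(4t+1,2t,t-1,t)$.

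I do not anticipate a real obstacle; the proof is essentially a linear-algebra observation (integrality of multiplicities forcing algebraic conjugates to have equal multiplicity) combined with the standard feasibility identity for strongly regular graphs. The only care needed is in the bookkeeping of the substitution that turns $(n-1)(c-a)=2k$ together with $k(k-a-1)=(n-k-1)c$ into the clean relation $c=d(k+d-1)/2$, and in noting that the bound $c\leqslant k$ (which holds without any primitivity assumption) is exactly what is needed to pin down $d=1$.
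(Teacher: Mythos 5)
The paper offers no proof of this lemma; it is quoted directly from Neumaier's paper (the ``cf.'' citation), so there is nothing in the source to compare line by line. Your argument is the standard one behind that citation and it is correct and complete: the nontrivial eigenvalues are roots of the monic integer quadratic $x^2-(a-c)x-(k-c)$, hence algebraic integers, so rationality (equivalently, the discriminant $(a-c)^2+4(k-c)$ being a perfect square) forces them to be rational integers; otherwise irrationality of $\theta_1-\theta_2$ together with integrality of $m_1-m_2=-\frac{(n-1)(a-c)+2k}{\theta_1-\theta_2}$ (this is exactly the quantity appearing in the paper's Lemma \ref{SRGspectrum}) forces $(n-1)(c-a)=2k$ and $m_1=m_2$. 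Your bookkeeping in the second half also checks out: with $d:=c-a\geqslant1$ and $n-1=2k/d$, the identity $k(k-a-1)=(n-k-1)c$ reduces to $c=\frac{d(k+d-1)}{2}$, and $c\leqslant k$ then rules out $d\geqslant2$, giving $d=1$, $c=k/2=:t$, $a=t-1$, $n=4t+1$, i.e.\ conference parameters. The only hypotheses you implicitly use --- that non-adjacent pairs exist (so $c\leqslant k$ makes sense) and that there genuinely are two distinct nontrivial eigenvalues --- are guaranteed by the paper's convention that a strongly regular graph is neither complete nor empty and by the lemma's assumption $k>\theta_1>\theta_2$, so no gap remains.
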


\begin{lemma}[{cf.\cite[Sections 10.2 and 10.3]{GD01}}]\label{SRGspectrum}
Let $G$ be a strongly regular graph with parameters $(n,k,a,c)$, where $k>c$. Then $G$ has exactly three distinct eigenvalues $k>\theta>\tau$ satisfying
\begin{align*}
\theta & =\frac{(a-c)+\sqrt{(a-c)^2+4(k-c)}}{2},\\
\tau & =\frac{(a-c)-\sqrt{(a-c)^2+4(k-c)}}{2}.
\end{align*}
Moreover, $m_\tau-m_\theta=\frac{2k+(n-1)(a-c)}{\sqrt{(a-c)^2+4(k-c)}}$, where $m_\theta$ and $m_\tau$ are the respective multiplicities of $\theta,\tau$.
\end{lemma}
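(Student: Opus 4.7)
The plan is to extract the eigenvalues and their multiplicities from a single quadratic matrix identity satisfied by the adjacency matrix of $G$. Writing $A$ for the adjacency matrix, $I$ for the identity and $J$ for the all-ones matrix (all of size $n$), I would first observe that the $(x,y)$-entry of $A^2$ counts the walks of length $2$ from $x$ to $y$, so the strong-regularity conditions translate into
\begin{equation*}
A^2 = kI + aA + c(J - I - A),
\end{equation*}
which rearranges to $A^2 - (a-c)A - (k-c)I = cJ$.

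Since $G$ is $k$-regular, the all-ones vector $\mathbf{1}$ is an eigenvector of $A$ with eigenvalue $k$, and every other eigenvector $v$ is orthogonal to $\mathbf{1}$, hence satisfies $Jv = 0$. For such a $v$ with $Av = \lambda v$, the above identity forces
\begin{equation*}
\lambda^2 - (a-c)\lambda - (k-c) = 0,
\end{equation*}
so the only possible non-$k$ eigenvalues are the two roots of this quadratic, namely $\theta$ and $\tau$ as stated. The assumption $k > c$ guarantees that the discriminant $(a-c)^2 + 4(k-c)$ is positive, so $\theta$ and $\tau$ are distinct real numbers, and that $\tau < 0 < \theta$ (in particular $\theta \neq k$). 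A standard connectedness check (or appealing to the fact that $G$ has exactly three distinct eigenvalues iff $G$ is strongly regular and non-complete) confirms that both multiplicities $m_\theta, m_\tau$ are positive, so $G$ has exactly three distinct eigenvalues.

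For the multiplicity formula I would use the two linear relations coming from $\operatorname{tr}(A) = 0$ and $\operatorname{tr}(I) = n$:
\begin{align*}
m_\theta + m_\tau &= n - 1,\\
m_\theta \theta + m_\tau \tau &= -k.
\end{align*}
Eliminating $m_\theta$ via $m_\theta = n - 1 - m_\tau$ and using $\theta + \tau = a - c$ together with $\theta - \tau = \sqrt{(a-c)^2 + 4(k-c)}$ (from Vieta applied to the quadratic), a short manipulation gives
\begin{equation*}
m_\tau - m_\theta = \frac{(n-1)(\theta + \tau) + 2k}{\theta - \tau} = \frac{2k + (n-1)(a-c)}{\sqrt{(a-c)^2 + 4(k-c)}},
\end{equation*}
which is the displayed formula. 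No step is really an obstacle; the only point requiring a line of care is excluding the degenerate case in which one of $\theta, \tau$ equals $k$ (which would reduce the spectrum to two values and correspond to a disconnected or complete graph), and this is ruled out by the standing assumptions that $G$ is strongly regular with $k > c$ and has more than one vertex.
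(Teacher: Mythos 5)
The paper offers no proof of this lemma---it is quoted directly from \cite[Sections 10.2 and 10.3]{GD01}---so there is nothing internal to compare against; your argument is the standard textbook derivation (the identity $A^2-(a-c)A-(k-c)I=cJ$, restriction to $\mathbf{1}^{\perp}$, and the two trace relations) and is correct. The one point deserving a line of care is the one you flag yourself: positivity of both multiplicities, equivalently $\theta\neq k$, requires $G$ to be connected, which follows from $c>0$ but not from $k>c$ alone (for $c=0$ the graph is a disjoint union of cliques and $\theta=k$); this is a gap in the lemma's hypotheses as transcribed rather than in your proof, and in the paper the lemma is only ever applied with $c\in\{1,2\}$.
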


\begin{lemma}[{cf.\cite[Theorem 4.7]{Neumaier.1979}}]\label{secondev}
Let $G$ be a strongly regular graph with parameters $(n,k,a,c)$ and eigenvalues $k>\theta_1>\theta_2$, where $\theta_2<-1$ is an integer. If $c\notin\{\theta_2(\theta_2+1),\theta_2^2\}$, then
\begin{equation*}
  \theta_1\leqslant\frac{\theta_2(\theta_2+1)(c+1)}{2}-1.
\end{equation*}
\end{lemma}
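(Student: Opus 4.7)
The plan is to prove this via Neumaier's \emph{$\mu$-bound} argument, working with the common neighbourhood of a non-edge. Set $m := -\theta_2$, an integer with $m \geqslant 2$ by hypothesis. Recall from Lemma \ref{SRGspectrum} that $\theta_1\theta_2 = c-k$ and $\theta_1+\theta_2 = a-c$, so $\theta_1 = (k-c)/m$ and $a = c + \theta_1 - m$. The target inequality is therefore equivalent to an upper bound on $k$ in terms of $c$ and $m$.

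First, I would fix non-adjacent vertices $x,y\in V(G)$ and let $S := N_G(x)\cap N_G(y)$, so that $|S|=c$. Let $H := G[S]$ denote the $\mu$-graph of the pair $\{x,y\}$. Since $H$ is an induced subgraph of $G$, Proposition~\ref{subgraph} gives $\theta_{\min}(H)\geqslant \theta_2 = -m$, and the standard Delsarte clique bound for strongly regular graphs with smallest eigenvalue $-m$ forces $\omega(G)\leqslant 1 + k/m$; in particular, every clique in $H$ has at most $1 + k/m$ vertices.

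Next I would extract regularity information on $H$. For any $z\in S$, the vertex $z$ is adjacent to $x$, so it has $a$ neighbours in $N_G(x)$. Splitting these neighbours according to whether they also lie in $N_G(y)$ (hence in $S$) or not, and doing the analogous count at $y$, one obtains that the degree of $z$ in $H$ together with the number of its neighbours in $N_G(x)\triangle N_G(y)$ is determined up to the strongly regular parameters. Double-counting edges between $S$ and $V(G)\setminus(S\cup\{x,y\})$, and using the value of $c$ for each non-adjacent pair involved, I would derive an inequality of the form
\[
c\,\bigl(k - \theta_1\bigr) \;\leqslant\; m\cdot(\text{bound depending on the clique structure of }H),
\]
obtained by combining Hoffman-type bounds on the independence number of $H$ with the clique bound above.

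Finally, I would analyse when the resulting estimate is tight. The two extremal configurations in which the clique-cover / Hoffman bound on $H$ is met with equality are exactly when $H$ is a disjoint union of cliques of size $m$ (giving $c = m^2$, the Latin-square $\mu$-graph) or of cliques of size $m-1$ with the appropriate pattern (giving $c = m(m-1)$, the Steiner $\mu$-graph). These are precisely the two excluded values $c\in\{\theta_2^2,\theta_2(\theta_2+1)\}$. Ruling them out forces a strict gap in the clique-count estimate and, after substitution of $\theta_1 = (k-c)/m$, yields the stated bound $\theta_1 \leqslant \theta_2(\theta_2+1)(c+1)/2 - 1$.

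The main obstacle is the careful extremal analysis identifying that the only $\mu$-graph configurations saturating the Hoffman/Delsarte bounds are those with $c = m^2$ or $c = m(m-1)$; everything else reduces to manipulating the strongly regular identities. Once that case analysis is done, the algebraic simplification is routine.
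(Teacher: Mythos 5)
The paper offers no proof of this lemma: it is imported verbatim from Neumaier's 1979 paper (his Theorem~4.7, the ``claw bound''), so there is nothing internal to compare against. Judged on its own merits, your sketch has a genuine gap at its core. The standard (and, as far as I know, only) proof does not analyse the $\mu$-graph of a non-edge at all. Writing $m=-\theta_2$, it considers a vertex $p$ and an $(m+1)$-claw at $p$, i.e.\ $m+1$ pairwise non-adjacent neighbours $q_1,\dots,q_{m+1}$ of $p$. Inclusion--exclusion on the sets $N_G(q_i)\cap N_G(p)$, using that two non-adjacent $q_i,q_j$ have at most $c-1$ common neighbours inside $N_G(p)$, gives
\[
(m+1)a-\binom{m+1}{2}(c-1)\leqslant k-(m+1),
\]
and substituting $a=c+\theta_1-m$ and $k=c+m\theta_1$ yields exactly $\theta_1\leqslant \tfrac{m(m-1)(c+1)}{2}-1$. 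If no $(m+1)$-claw exists, the neighbourhood of every vertex is covered by $m$ Delsarte cliques, the graph is geometric (Steiner or Latin-square type), and one lands in the excluded cases $c\in\{m(m-1),m^2\}$. Note how the coefficients $\binom{m+1}{2}$ and $c-1$ are precisely what produce the $\tfrac{m(m-1)}{2}(c+1)$ in the conclusion.

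Your proposal never supplies a mechanism that could generate those coefficients. The displayed inequality $c(k-\theta_1)\leqslant m\cdot(\text{bound})$ is a template with an unspecified right-hand side, so nothing is actually derived; the Delsarte bound $\omega(G)\leqslant 1+k/m$ says nothing useful about the $\mu$-graph $H$, which has only $c$ vertices; and the pivotal claim --- that the only configurations saturating your (unstated) estimate are $\mu$-graphs that are disjoint unions of $m$-cliques or $(m-1)$-cliques, forcing $c=m^2$ or $c=m(m-1)$ --- is asserted, not proved, and is in fact the entire content of the theorem. (It is also not true in general that the $\mu$-graphs of a strongly regular graph with smallest eigenvalue $-m$ are unions of cliques.) As written, the argument cannot be completed along these lines; you should either reproduce Neumaier's claw argument or simply cite \cite[Theorem 4.7]{Neumaier.1979} as the paper does.
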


\begin{lemma}[{cf.\cite[Corollary 3.12.3 and Theorem 3.12.4]{BCN}}]\label{atleast-2}
Let $G$ be a connected regular graph with smallest eigenvalue $\theta_{\min}$.
\begin{enumerate}
\item If $\theta_{\min}>-2$, then $G$ is a clique or an odd cycle.
\item If $G$ is a strongly regular graph and $\theta_{\min}=-2$, then $G$ is a triangle graph $T(n)$ ($n\geqslant5$), a square grid $n\times n$ ($n\geqslant3$), a complete multipartite graph $K_{n\times2}$ ($n\geqslant2$), or one of the graphs of Petersen, Clebsch, Schl\"{a}fli, Shrikhande, or Chang.
\end{enumerate}
\end{lemma}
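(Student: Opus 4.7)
The plan is to recognize that both parts of the lemma are classical results, so that in practice the proof amounts to invoking the corresponding statements in \cite{BCN}. Nevertheless, here is the strategy I would follow.

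For part (i), I would use a forbidden induced subgraph argument combined with Proposition \ref{subgraph}. The two key spectral facts are that every even cycle $C_{2m}$ ($m \geqslant 2$) has smallest eigenvalue exactly $-2$ and the star $K_{1,4}$ has smallest eigenvalue exactly $-2$; hence a graph with $\theta_{\min} > -2$ can contain neither as an induced subgraph. A connected regular graph $G$ that is neither a clique nor an odd cycle falls into one of two cases: either its valency is $2$, in which case $G$ is a cycle and must be even (contradicting the forbidden list), or its valency is at least $3$. In the second case a short local analysis using the existence of non-adjacent vertex pairs together with regularity forces either an induced $K_{1,4}$ at some vertex or an induced even cycle, yielding the desired contradiction.

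For part (ii), I would appeal to the classification of graphs with smallest eigenvalue $-2$ due to Cameron, Goethals, Seidel and Shult: every connected graph with $\theta_{\min} \geqslant -2$ is either a generalized line graph, or one of a finite family of exceptional graphs embedded in the $E_8$ root lattice. Among line graphs that are strongly regular, one recovers the triangular graphs $T(n)$ (line graphs of $K_n$), the $n \times n$ grids (line graphs of $K_{n,n}$), and the cocktail party graphs $K_{n \times 2}$ (line graphs of $nK_2$); generalized line graphs beyond ordinary line graphs contribute no further strongly regular examples, since requiring constant $a$ and $c$ is highly restrictive. Among the exceptional graphs, the strongly regular members are precisely the Petersen, Clebsch, Schl\"{a}fli, Shrikhande, and Chang graphs, a finite case-check once one has the CGSS classification in hand.

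The main obstacle to a self-contained proof is the Cameron--Goethals--Seidel--Shult theorem, whose proof rests on the theory of root systems and is substantially beyond the scope of this paper. We therefore simply quote the lemma from \cite[Corollary 3.12.3 and Theorem 3.12.4]{BCN}.
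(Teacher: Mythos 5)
The paper offers no proof of this lemma at all---it is quoted directly from \cite[Corollary 3.12.3 and Theorem 3.12.4]{BCN}---and your proposal ultimately does the same, so the approaches coincide; your background sketch (forbidden induced subgraphs with least eigenvalue $-2$ for part (i), the Cameron--Goethals--Seidel--Shult root-system classification for part (ii)) is indeed the standard route. One small factual slip in the sketch: $K_{n\times 2}$ is not the line graph of $nK_2$ (that line graph is edgeless); for $n\geqslant 4$ the cocktail party graphs are generalized line graphs that are \emph{not} line graphs, so generalized line graphs do contribute strongly regular examples beyond ordinary line graphs---but this does not affect your argument, since you defer to the citation in the end.
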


Table \ref{table} below states the parameters of $K_n, C_5$ and all the graphs in Lemma \ref{atleast-2} (ii).

\begin{table}[h]
  \centering
  \begin{tabular}{c|c}
  \hline\hline
  Graph & Parameters \\ \hline
  $K_n$ & $(n,n-1,n-2,0)$ \\ \hline
  the pentagon $C_5$ & $(5,2,0,1)$ \\\hline
  $T(n)$ ($n\geqslant5$) & $(\frac{n(n-1)}{2},2n-4,n-2,4)$ \\\hline
  $n\times n$-grid ($n\geqslant3$) & $(n^2,2n-2,n-2,2)$ \\\hline
  $K_{n\times2}$ ($n\geqslant2$) & $(2n,2(n-1),2(n-2),2(n-1))$ \\\hline
  the Petersen graph & $(10,3,0,1)$ \\\hline
  the Clebsch graph & $(16,10,6,6)$ \\\hline
  the Schl\"{a}fli graph & $(27,16,10,8)$ \\\hline
  the Shrikhande graph & $(16,6,2,2)$ \\\hline
  the Chang graphs & $(28,12,6,4)$ \\\hline
  \hline
\end{tabular}
  \caption{Parameters of $K_n, C_5$ and strongly regular graphs with smallest eigenvalue $-2$}\label{table}
\end{table}

\subsection{Terwilliger graphs}
A \emph{Terwilliger graph} is a non-complete graph $G$ such that, for any two vertices $x,y$ at distance $2$, the subgraph induced by $N_G(x,y)$ forms a clique of size $c$ (for some fixed $c\geqslant 0$).
\begin{lemma}[{cf.\cite[Proposition 1.16.2]{BCN}}]\label{s-clique}
  Let $G$ be a connected co-edge-regular Terwilliger graph. Then $G$ is the $s$-clique extension of a strongly regular graph, where $s$ is a positive integer.
\end{lemma}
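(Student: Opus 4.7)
The plan is to realize $G$ as an $s$-clique extension of a quotient obtained by collapsing twin vertices, and to show the quotient is strongly regular. I would define the relation $x \equiv y$ on $V(G)$ by $N_G[x] = N_G[y]$; this is trivially an equivalence relation. Each equivalence class $C_i$ is a clique (since $x \in N[x] = N[y]$ forces $x \sim y$ whenever $x \neq y$), and any two distinct classes are either completely joined or completely non-adjacent (by constancy of $N[\cdot]$ along a class). A convenient reformulation: for distinct $x, y$, one has $x \equiv y$ iff $x \sim y$ and $a_{xy} = k - 1$ (a short cardinality comparison using $|N(x)\cap N(y)|\leqslant |N(x)\setminus\{y\}|=k-1$).

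The Terwilliger hypothesis enters through the following key observation: for every non-edge $\{v, w\}$ of $G$, the $c$-clique $K_{v,w} := N_G(v, w)$ is a union of complete twin classes, and the distinct classes appearing in $K_{v,w}$ are pairwise joined. Closure under $\equiv$ is a short check: if $u \in K_{v,w}$ and $u' \equiv u$, then $v, w \in N[u] = N[u']$; the non-adjacency of $v$ and $w$ combined with $v, w \notin [u]$ (otherwise the equality $N[v]=N[u]$ would force $w\sim v$) rules out $u' \in \{v, w\}$, forcing $u' \in K_{v,w}$. Pairwise-joinedness of distinct classes in $K_{v,w}$ is immediate from $K_{v,w}$ being a clique.

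The main obstacle is upgrading this structure to uniform class sizes and strong regularity of the quotient. Co-edge-regularity pins the total size of the class-decomposition of each $K_{v,w}$ to $c$; combined with connectedness of $G$ and a propagation argument (each twin class can be shown to appear as a summand in some $K_{v,w}$, with degenerate classes whose neighborhoods in the quotient are already cliques handled by a separate reduction to $G = K_n$), this forces $|C_i| = s$ for some common $s$. Defining the quotient graph $H$ on $\{\bar C_1, \ldots, \bar C_r\}$ with $\bar C_i \sim \bar C_j$ iff $C_i$ and $C_j$ are completely joined, the identity $G = \tilde H$ is then immediate from the adjacency-matrix formula $\mathbf{J}_s \otimes (A(H) + \mathbf{I}) - \mathbf{I}$ recalled in the preliminaries.

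It remains to show $H$ is strongly regular. Regularity with valency $(k - s + 1)/s$ and co-edge-regularity with parameter $c/s$ descend directly from $G$. Edge-regularity is the final delicate step, which I would obtain by analyzing the local graph $L(\bar v)$ at a vertex of $H$: this local graph inherits the Terwilliger property with parameter $c/s - 1$, and combined with the reducedness of $H$ (no twins, by construction) and its co-edge-regularity, one pins down a constant number of common neighbors on every edge of $H$. This final uniformity argument — both the equal-size step and the edge-regularity of the reduced quotient — is the technical heart of the proof.
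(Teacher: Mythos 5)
The paper offers no proof of this lemma: it is imported verbatim from Brouwer--Cohen--Neumaier (Proposition 1.16.2), so there is no in-paper argument to measure you against. Your framework is nonetheless the standard one for this result: the twin relation $N_G[x]=N_G[y]$, the fact that its classes are cliques and that distinct classes are completely joined or completely non-adjacent, and the observation that for a non-edge $\{v,w\}$ the $c$-clique $N_G(v,w)$ is a union of twin classes. Those parts of your write-up are correct as stated.

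The problem is that the two steps you defer to ``the technical heart of the proof'' are precisely the content of the lemma, and the one argument you do sketch for the first of them does not work. Knowing that every twin class occurs as a summand of \emph{some} $c$-clique $N_G(v,w)$ that decomposes into classes places no constraint tying the sizes of different classes together: two different non-edges may decompose their $c$-cliques into classes of different sizes, and even a single $N_G(v,w)$ containing two or more classes is, on the information you have established, free to contain classes of unequal sizes summing to $c$. So ``this forces $|C_i|=s$ for some common $s$'' is a non sequitur; what is needed is a comparison of the sizes of two specific interacting classes (say, adjacent non-twin classes, using that the quotient has no adjacent twins to manufacture a vertex seeing one class but not the other, and then counting inside the resulting $c$-cliques), propagated along the connected quotient. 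Similarly, edge-regularity of the quotient is asserted (``one pins down a constant number of common neighbors'') with no counting at all, and it does not follow formally from regularity plus co-edge-regularity plus the Terwilliger property of the quotient without further work. As it stands the proposal is a correct reduction to the right combinatorial picture, but the lemma itself is not proved.
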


\begin{proposition}[{cf.\cite[Proposition 6 (1) and (2)]{Koolen.2012}}] \label{k bound}
\begin{enumerate}
  \item  Let $G$ be a connected non-complete strongly regular with parameters $(n,k,a,1)$. If $k < 7(a+1)$, then $G$ is either the pentagon or the Petersen graph.
  \item Let $G$ be a connected non-complete strongly regular Terwilliger graph with $n$ vertices and valency $k$. If $k \leqslant 7a$, then $G$ is either the pentagon or the Petersen graph.
\end{enumerate}
\end{proposition}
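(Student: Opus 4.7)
My plan is to analyze the local graph $N_G(x)$ around a fixed vertex $x$, combine the resulting clique-partition of $N_G(x)$ with the eigenvalue dichotomy from Lemma \ref{conferencegraph} and Lemma \ref{secondev}, and invoke Lemma \ref{atleast-2} to pin down $G$.

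For part (1), the hypothesis $c = 1$ yields strong local rigidity. For any edge $xy$ and any two distinct $w_1, w_2 \in N_G(x, y)$, if $w_1 \not\sim w_2$ then $w_1, w_2$ would share $x$ and $y$ as common neighbors, contradicting $c = 1$; hence $w_1 \sim w_2$. Applying this inside $N_G(x)$, every connected component of $N_G(x)$ is a clique, and since $N_G(x)$ is $a$-regular each component is an $(a+1)$-clique. Thus $N_G(x)$ is a disjoint union of $t := k/(a+1)$ cliques of size $a+1$, which forces $(a+1) \mid k$ and $t \leqslant 6$ under $k < 7(a+1)$. Now Lemma \ref{conferencegraph} splits into two cases: either $G$ is a conference graph, in which case $c = 1$ forces parameters $(5,2,0,1)$ and $G \cong C_5$; or both non-principal eigenvalues are integers. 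Writing $\theta_2 = -s$ with $s \geqslant 2$, the case $s = 2$ is handled by Lemma \ref{atleast-2}(ii) together with Table \ref{table}, where the Petersen graph is the only listed SRG with $c = 1$. For $s \geqslant 3$, Lemma \ref{secondev} (noting $1 \notin \{s(s-1), s^2\}$) yields $\theta_1 \leqslant s(s-1) - 1$; combining with $k = 1 + s\theta_1$, $a = \theta_1 - s + 1$, and $k < 7(a+1)$ eliminates $s \geqslant 7$ and leaves only finitely many candidate tuples for $s \in \{3,4,5,6\}$, all of which can be ruled out by multiplicity integrality from Lemma \ref{SRGspectrum}, the divisibility $(a+1) \mid k$, and standard Krein/absolute-bound checks.

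For part (2), the same framework applies with the Terwilliger--SRG hypothesis playing the role of $c = 1$: any two non-adjacent $w_1, w_2 \in N_G(x, y)$ must have common neighborhood a $c$-clique containing $\{x, y\}$, which again produces a clique-based decomposition of $N_G(x)$ suitable for counting. The eigenvalue dichotomy from Lemma \ref{conferencegraph} separates a small conference case from the integer case, and Lemma \ref{atleast-2}(ii) together with Lemma \ref{secondev} reduce the problem under $k \leqslant 7a$ to a finite candidate list that isolates the pentagon and Petersen graph.

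The main obstacle I anticipate is the uniform elimination of the residual candidate parameter tuples for $s \in \{3,4,5,6\}$ in both parts. This combines eigenvalue integrality, Krein conditions, and the local-clique constraints derived above, and it is where the bulk of the technical bookkeeping sits; by contrast, the structural reduction and eigenvalue setup described above constitute the main conceptual step.
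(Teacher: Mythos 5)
First, a point of reference: the paper does not prove this proposition at all --- it is quoted from \cite{Koolen.2012}, Proposition 6 --- so there is no internal argument to compare yours against; your attempt has to stand on its own. For part (i) your structural setup is sound: with $c=1$ the local graph of every vertex is a disjoint union of $(a+1)$-cliques, so $(a+1)\mid k$ and $k<7(a+1)$ forces $t=k/(a+1)\leqslant 6$; the conference branch of Lemma \ref{conferencegraph} gives the pentagon, and the $\theta_2=-2$ branch gives the Petersen graph via Lemma \ref{atleast-2}(ii) and Table \ref{table}. The gap is precisely the step you defer as ``technical bookkeeping'': the residual tuples with $\theta_2=-s$, $3\leqslant s\leqslant 6$, are \emph{not} all eliminated by multiplicity integrality, the divisibility $(a+1)\mid k$, and Krein/absolute-bound checks. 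Concretely, $s=3$, $\theta_1=5$ yields the parameter set $(n,k,a,c)=(209,16,3,1)$: here $k=16<28=7(a+1)$, $4\mid 16$, the multiplicities are $76$ and $132$ (integers), both Krein conditions and the absolute bound hold, and Lemma \ref{secondev} is satisfied with equality. None of your listed tools rules it out. (This particular set happens to die because the number of maximal $(a+2)$-cliques, $nt/(a+2)=209\cdot 4/5$, is not an integer --- a condition you did not invoke --- while the tuple $s=4$, $\theta_1=11$, i.e.\ $(1666,45,8,1)$, survives even that count. So finishing the proof requires a genuinely further combinatorial argument, not routine feasibility checks.)

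Part (ii) has a more basic problem. For a Terwilliger strongly regular graph with $c\geqslant 2$, two non-adjacent vertices $y,y'\in N_G(x)$ have their common neighbourhood equal to a $c$-clique containing $x$, hence they share $c-1\geqslant 1$ common neighbours \emph{inside} $N_G(x)$; the local graph is therefore not a disjoint union of cliques, and the decomposition that drives your part (i) does not transfer. ``The same framework applies'' is asserted rather than established, and no reduction to a finite list is actually carried out there. As written, the proposal is a reasonable plan for part (i) minus its hardest step, and does not genuinely engage with part (ii); citing \cite{Koolen.2012} as the paper does, or reproducing its actual argument, is the honest way to close this.
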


\begin{lemma}\label{noquadrangles}
Let $G$ be a strongly co-edge-regular graph with parameters $(n,k,2, \ell)$. If $\ell \geqslant \frac{2}{7}k$, then $G$ contains an induced quadrangle or $G$ is the $2$-clique extension of the pentagon or the Petersen graph.
\end{lemma}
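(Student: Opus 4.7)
The plan is to assume $G$ contains no induced quadrangle and deduce that $G$ must be the $2$-clique extension of the pentagon or of the Petersen graph; the hypothesis $\ell\geqslant\frac{2}{7}k$ is what delivers the valency bound needed to invoke Proposition \ref{k bound}.

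First I would note that $G$ is connected (since $c=2$ forces every non-adjacent pair to sit at distance $2$) and is a Terwilliger graph: for any two non-adjacent vertices $x,z$ the two common neighbours $y_1,y_2$ must be adjacent, since otherwise $\{x,y_1,z,y_2\}$ would induce a quadrangle. Hence Lemma \ref{s-clique} presents $G$ as the $s$-clique extension of a strongly regular graph $H$ with parameters $(n',k',a',c')$ for some integer $s\geqslant 1$. A common-neighbour count across the clique extension---two non-adjacent vertices of $G$ come from two non-adjacent fibres of $H$, and their common neighbours in $G$ are precisely the $sc'$ copies of the $c'$ common neighbours of those fibres in $H$---forces $sc'=2$, so either $(s,c')=(1,2)$ or $(s,c')=(2,1)$.

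In the case $s=1$, $G=H$ is strongly regular with parameters $(n,k,a,2)$, so $a_{xy}=a$ on every edge and $\ell=2a$; the hypothesis becomes $k\leqslant 7a$, and Proposition \ref{k bound}(ii) would force $G$ to be the pentagon or the Petersen graph, both of which have $c=1\neq 2$. Thus this case is impossible. In the case $(s,c')=(2,1)$ a direct computation in the $2$-clique extension gives $k=2k'+1$ and, for any two non-adjacent vertices $x,z$, each of their two common neighbours $y$ satisfies $a_{xy}=2a'+2$ (the two vertices in the fibres of $x$ and $y$ other than $x,y$ themselves, plus two copies of each of the $a'$ common neighbours of those fibres in $H$), so $\ell=4(a'+1)$; the hypothesis $\ell\geqslant\frac{2}{7}k$ then rearranges, using that $k'$ is an integer, to $k'<7(a'+1)$, and Proposition \ref{k bound}(i) identifies $H$ as the pentagon or the Petersen graph. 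The main obstacle is the common-neighbour bookkeeping inside the clique extension (used both to derive $sc'=2$ and to evaluate $a_{xy}$ and $\ell$ in the case $(s,c')=(2,1)$); everything else is a direct application of the two parts of Proposition \ref{k bound} together with the observation that the pentagon and the Petersen graph both have $c=1$.
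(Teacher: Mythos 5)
Your proposal is correct and follows essentially the same route as the paper: reduce to a Terwilliger graph, apply Lemma \ref{s-clique} to write $G$ as an $s$-clique extension of a strongly regular graph, and dispose of the cases $s=1$ and $s=2$ via Proposition \ref{k bound}(ii) and (i) respectively. Your extra bookkeeping (deriving $sc'=2$ explicitly, and noting that the pentagon and Petersen graph have $c=1$ to kill the $s=1$ case) only makes explicit what the paper leaves implicit.
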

\begin{proof}
Let $G$ be a strongly co-edge-regular graph with parameters $(n,k,2, \ell)$ which does not contain induced quadrangles. Then $G$ is a Terwilliger graph of diameter $2$ and hence $G$ is the $s$-clique extension of a non-complete strongly regular graph $H$ for some integer $s$, by Lemma \ref{s-clique}.

If $s=1$, then  $G$ is a strongly regular Terwilliger graph with parameters $(n,k,a,2)$ and $\ell =2a$. As $\ell \geqslant \frac{2}{7}k$ we have $k \leqslant 7a$. Thus, by Proposition \ref{k bound} $(ii)$ , we see that no such graph exists.

If $s=2$, then $G$ is the $2$-clique extension of a strongly regular graph $H$ with parameters $(n_H,k_H, a_H,1)$. It is not hard to see that
\begin{equation*}
  n_H= \frac{n}{2},  \text{  }  k_H = \frac{k-1}{2}, \text{ and } a_H = \frac{\ell-4}{4}.
\end{equation*}
As $\ell \geqslant \frac{2}{7}k$, we have $k_H < 7(a_H+1)$ and hence $H$ is the pentagon or the Petersen graph, by Proposition \ref{k bound}$(i)$. This completes the proof of the lemma.

\end{proof}

\section{Co-edge-regular graphs and strongly co-edge-regular graphs}\label{section3}
In this section, we state some results for co-edge-regular graphs and strongly co-edge-regular graphs.

\begin{lemma}\label{twoconstants}
Let $G$ be a walk-regular and co-edge-regular graph with parameters $(n,k,c)$. Let $x$ be a vertex of $G$ and $a_{xy}$ the number of common neighbours of $x,y$ for $y\in N_G(x)$. Then $\sum\limits_{y\sim x}a_{xy}$ and $\sum\limits_{y\sim x}a_{xy}^2$ only depend on the spectrum of $G$.
\end{lemma}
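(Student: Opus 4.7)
The plan is to write both quantities as combinations of diagonal entries of powers of the adjacency matrix $A$ of $G$, after which walk-regularity does all the work: each such diagonal entry satisfies $(A^r)_{xx}=\frac{1}{n}\operatorname{tr}(A^r)=\frac{1}{n}\sum_{i}m_i\theta_i^r$ and is therefore determined by the spectrum of $G$.

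The key observation is the standard identity $(A^2)_{xy}=|N_G(x)\cap N_G(y)|$, which, combined with co-edge-regularity, gives $(A^2)_{xx}=k$, $(A^2)_{xy}=a_{xy}$ whenever $y\sim x$, and $(A^2)_{xy}=c$ whenever $y\neq x$ and $y\not\sim x$. With this in hand I would immediately compute
\[
\sum_{y\sim x}a_{xy}=\sum_{y\in V(G)}A_{xy}(A^2)_{xy}=(A^3)_{xx},
\]
which settles the first sum.

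For the second sum, I would extract $\sum_{y\sim x}a_{xy}^2$ from $(A^4)_{xx}$. Since $A^2$ is symmetric,
\[
(A^4)_{xx}=\sum_{y\in V(G)}(A^2)_{xy}^2=k^2+\sum_{y\sim x}a_{xy}^2+(n-1-k)c^2,
\]
obtained by splitting off the $y=x$ diagonal contribution and using the co-edge-regularity value on the non-adjacent pairs. Rearranging yields
\[
\sum_{y\sim x}a_{xy}^2=(A^4)_{xx}-k^2-(n-1-k)c^2.
\]

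The only subtlety worth flagging is that $n$, $k$ and $c$ must themselves be spectral invariants. This is easy: $n=\sum_i m_i$, $k$ is the largest eigenvalue of the regular graph $G$, and taking the $x$-th row-sum of $A^2$ gives $k^2=k+(A^3)_{xx}+(n-1-k)c$, from which $c$ is recovered since $n-1-k\geqslant 1$ (the paper's definition of co-edge-regular excludes complete graphs). There is no real obstacle beyond this bookkeeping; the content of the lemma is entirely encapsulated in the identification $(A^2)_{xy}\in\{k,a_{xy},c\}$ together with walk-regularity.
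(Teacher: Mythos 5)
Your proof is correct and follows essentially the same route as the paper: identify $\sum_{y\sim x}a_{xy}$ with $(A^3)_{xx}$ and extract $\sum_{y\sim x}a_{xy}^2$ from $(A^4)_{xx}=k^2+\sum_{y\sim x}a_{xy}^2+(n-1-k)c^2$, then invoke walk-regularity. The extra bookkeeping showing that $n$, $k$ and $c$ are themselves spectral invariants is a harmless addition the paper leaves implicit.
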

\begin{proof}
Let $G$ be a walk-regular graph and co-edge-regular graph with parameters $(n,k,c)$. Let $A$ be the adjacency matrix of $G$. As $G$ is walk-regular, for any vertex $x$, the numbers $(A^3)_{xx}$ and $(A^4)_{xx}$ only depend on the spectrum of $G$.

As $\sum\limits_{y\sim x}a_{xy}=(A^3)_{xx}$, we see that $\sum\limits_{y\sim x}a_{xy}$ only depends on the spectrum of $G$ for any vertex $x$ of $G$.

Note that
\begin{equation*}
  (A^4)_{xx}  =k^2+\sum\limits_{y\sim x}a^2_{xy}+(n-k-1)c^2,
\end{equation*}
as $G$ is co-edge-regular with parameters $(n,k,c)$. Hence,  $\sum\limits_{y\sim x}a_{xy}^2$ only depends on the spectrum of $G$ for any vertex $x$ of $G$, as
$\sum\limits_{y\sim x}a_{xy}$ only depends on the spectrum of $G$.

\end{proof}

\begin{lemma}\label{SRG}
Let $G$ be a walk-regular and co-edge-regular graph. If there exists a vertex $x\in V(G)$ such that $a_{xy}=a_{xy'}$ for all $y,y'\in N_G(x)$, then $G$ is strongly regular.
\end{lemma}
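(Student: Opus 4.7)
The plan is to exploit the walk-regularity to push the constancy of $a_{xy}$ at the single vertex $x$ to every vertex of $G$, via a variance argument.

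First, I would invoke Lemma \ref{twoconstants}: since $G$ is walk-regular and co-edge-regular, both quantities $S_1(z) := \sum_{y \sim z} a_{zy}$ and $S_2(z) := \sum_{y \sim z} a_{zy}^2$ depend only on the spectrum of $G$, so in particular they take the same value at every vertex $z \in V(G)$.

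Next, evaluate these constants at the distinguished vertex $x$. By hypothesis there is a number $a$ such that $a_{xy} = a$ for all $y \in N_G(x)$. Since $G$ is $k$-regular, this gives $S_1(x) = ka$ and $S_2(x) = ka^2$. Therefore $S_1(z) = ka$ and $S_2(z) = ka^2$ for every vertex $z \in V(G)$.

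Now I would fix an arbitrary vertex $z$ and consider
\begin{equation*}
\sum_{y \sim z} (a_{zy} - a)^2 = S_2(z) - 2a\, S_1(z) + k a^2 = ka^2 - 2a(ka) + ka^2 = 0.
\end{equation*}
Since each summand is a nonnegative integer, this forces $a_{zy} = a$ for every $y \in N_G(z)$. Hence any two adjacent vertices of $G$ have exactly $a$ common neighbours, which together with the co-edge-regularity $(n,k,c)$ shows that $G$ is strongly regular with parameters $(n,k,a,c)$.

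There is no real obstacle here; the only step requiring any thought is the variance identity, which is immediate once one realises that the constancy of both the first and second moments of $(a_{zy})_{y \sim z}$ is enough to pin down the sequence to the constant $a$.
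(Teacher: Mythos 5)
Your proposal is correct and follows essentially the same route as the paper: invoke Lemma \ref{twoconstants} to transfer the first and second moments of $(a_{zy})_{y\sim z}$ from $x$ to an arbitrary vertex, then conclude by the vanishing of the variance sum $\sum_{y\sim z}(a_{zy}-a)^2$. (Your computation even fixes a small typo in the paper's display, where the constant term should be $ka^2$ rather than $a^2$.)
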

\begin{proof}
Let $G$ be a walk-regular and co-edge-regular graph. Let $x$ be a vertex in $G$, such that $a_{xy}=a$ for all $y\in N_G(x)$, where $a$ is a constant. Let $u$ be a vertex in $G$. We now show that $a_{uv}=a$ for all $v\in N_G(u)$. Note that, $\sum\limits_{v\sim u}a_{uv}=\sum\limits_{y\sim x}a_{xy}$ and $\sum\limits_{v\sim u}a_{uv}^2=\sum\limits_{y\sim x}a_{xy}^2$, by Lemma \ref{twoconstants}. Hence,
\begin{equation*}
\begin{array}{rl}
  \sum\limits_{v\sim u}(a_{uv}-a)^2 & = \sum\limits_{v\sim u}a_{uv}^2-2a\sum\limits_{v\sim u}a_{uv}+a^2 \\
   & =\sum\limits_{y\sim x}a_{xy}^2-2a\sum\limits_{y\sim x}a_{xy}+a^2 \\
   & =\sum\limits_{y\sim x}(a_{xy}-a)^2=0.
\end{array}
\end{equation*}
This shows the lemma.
\end{proof}

\begin{lemma}\label{clique}
Let $G$ be a strongly co-edge-regular graph with parameters $(n,k,2,\ell)$. Let $x$ be a vertex in $G$ and let $W:=\{w\mid x\sim w, a_{xw}\geqslant \frac{k}{2}\}$. If $W\neq\emptyset$, then $W$ induces a clique in $G$.
\end{lemma}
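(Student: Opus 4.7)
The plan is to argue by contradiction. Suppose $W$ does not induce a clique, so there exist $w_1, w_2 \in W$ with $w_1 \not\sim w_2$. Since $x \sim w_1$ and $x \sim w_2$, the vertex $x$ is a common neighbour of $w_1$ and $w_2$, and by the co-edge-regularity with $c=2$ we have $|N_G(w_1)\cap N_G(w_2)| = 2$.

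Next I would focus on what happens inside $N_G(x)$. Let $A := N_G(w_1)\cap N_G(x)$ and $B := N_G(w_2)\cap N_G(x)$, so $|A| = a_{xw_1}$ and $|B| = a_{xw_2}$, both at least $k/2$ by the definition of $W$. Since $w_1\not\sim w_2$ (and loops are not allowed), neither $w_1$ nor $w_2$ lies in $A$ or $B$; thus $A,B \subseteq N_G(x)\setminus\{w_1,w_2\}$, a set of size $k-2$. Inclusion-exclusion then yields
\[
|A\cap B| \;\geqslant\; |A| + |B| - (k-2) \;\geqslant\; \tfrac{k}{2} + \tfrac{k}{2} - (k-2) \;=\; 2.
\]

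Now I would obtain the matching upper bound. Because $A\subseteq N_G(w_1)$ and $B\subseteq N_G(w_2)$, we have $A\cap B\subseteq N_G(w_1)\cap N_G(w_2)$, and this common-neighbour set has exactly $2$ elements. One of these elements is $x$ itself, but $x\notin N_G(x)$, so $x\notin A\cap B$. Consequently $|A\cap B|\leqslant 2-1 = 1$, contradicting the lower bound just obtained. Hence $w_1\sim w_2$, proving that $W$ is a clique.

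The argument is short and the only subtle point, which I would expect to be the main thing to highlight, is keeping careful track of the fact that $x$ is automatically a common neighbour of any two vertices of $N_G(x)$ yet is not itself a member of $N_G(x)$; this is precisely what forces the upper bound $|A\cap B|\leqslant 1$ (as opposed to $2$) and thereby produces the contradiction. Notably, the proof uses only the co-edge-regularity with $c=2$; the parameter $\ell$ does not appear.
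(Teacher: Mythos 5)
Your proof is correct and follows essentially the same route as the paper: both arguments use inclusion--exclusion inside $N_G(x)\setminus\{w_1,w_2\}$ to find at least two common neighbours of $w_1,w_2$ among the neighbours of $x$, and then note that $x$ itself is a third common neighbour, contradicting $c=2$. The bookkeeping about $x$ that you highlight is exactly the point the paper makes by writing $\{x\}\cup\bigl(N_G(x,w_1)\cap N_G(x,w_2)\bigr)\subseteq N_G(w_1,w_2)$.
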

\begin{proof}
Let $G$ be a strongly co-edge-regular graph with parameters $(n,k,c,\ell)$, where $c=2$. Let $x$ be a vertex in $G$. Let $W:=\{w\mid x\sim w, a_{xw}\geqslant \frac{k}{2}\}$. The lemma is clear when $|W|=1$. Now we assume $|W|\geqslant2$. Take $w_1,w_2$ in $W$. Suppose that $w_1,w_2$ are not adjacent. Note that $\{w_1,w_2\}\cup N_G(x,w_1) \cup N_G(x,w_2)\subseteq N_G(x)$. Then,
\begin{equation*}
  |N_G(x,w_1)\cap N_G(x,w_2)|\geqslant 2+a_{xw_1}+a_{xw_2}-k\geqslant 2.
\end{equation*}
 This means $w_1$ and $w_2$ have at least $3$ common neighbours in $G$, as $\{x\}\cup (N_G(x,w_1)\cap N_G(x,w_2))\subseteq N_G(w_1,w_2)$. This is a contradiction, as $c=2$. This shows the lemma.
\end{proof}

The following theorem shows that a strongly co-edge-regular graph with large clique number has large $\ell$.

\begin{theorem}\label{largec1}
  Let $G$ be a strongly co-edge-regular graph with parameters $(n,k,2,\ell)$ and clique number $\omega$. If $\omega>\frac{\ell+4}{2}$, then $\ell=k-2$.
\end{theorem}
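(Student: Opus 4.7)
The plan is to fix a maximum clique $C$ of size $\omega$, a vertex $u\in C$, and to analyze $N(u)$ through the partition $N(u)=(C\setminus\{u\})\cup H$, where $H:=N(u)\setminus C$. The objective is to prove that $H$ is a clique; since I will also show $H$ is regular of degree $\ell-\omega+2$, this forces $\ell-\omega+2=|H|-1=k-\omega$, i.e., $\ell=k-2$.

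First I would establish that $a_{uy}=\omega-2$ for every $y\in C\setminus\{u\}$. Note the hypothesis forces $\omega\geq 3$ and $\ell\leq 2\omega-5$. If instead $a_{uy}>\omega-2$, some common neighbor $w$ of $u,y$ lies outside $C$; by maximality of $C$ together with $c=2$, one obtains $N(w)\cap C=\{u,y\}$. Then for any $v\in C\setminus\{u,y\}$, $v\not\sim w$ and $N(v)\cap N(w)=\{u,y\}$, so the strongly co-edge-regular identity gives $\ell=a_{vu}+a_{vy}\geq 2(\omega-2)$, contradicting $\omega>(\ell+4)/2$. A direct consequence is that every $w\notin C$ satisfies $|N(w)\cap C|\leq 1$; in particular each $w\in H$ has $N(w)\cap C=\{u\}$, hence $a_{uw}=\deg_H(w)$.

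Next I would show $H$ is regular of degree $\ell-\omega+2$. Fix $w\in H$ and any $v\in C\setminus\{u\}$. The pair $(v,w)$ is non-adjacent with common neighbors $\{u,y\}$, where $y\in H_v:=N(v)\setminus C$ (the first-step reasoning applied at $v$ rules out $y\in C$). Since $y\in H_v$ we have $y\not\sim u$, and the pair $(u,y)$ has common neighbors $\{v,w\}$. The strongly co-edge-regular identity then gives $\ell=a_{uv}+a_{uw}=(\omega-2)+\deg_H(w)$, whence $\deg_H(w)=\ell-\omega+2$.

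The main obstacle is to show $H$ is a clique. Suppose not, and pick non-adjacent $w_1,w_2\in H$ with common neighbors $\{u,y\}$. If $y\notin N(u)$, the non-adjacent pair $(u,y)$ has common neighbors $\{w_1,w_2\}$, and the identity yields $\ell=2(\ell-\omega+2)$, i.e., $\ell=2\omega-4$, against the hypothesis. So $y\in H$, and applying the identity to $(w_1,w_2)$ gives $a_{w_1y}=\omega-2$. Decompose the common neighbors of $w_1$ and $y$: one is $u$; none lie in $C\setminus\{u\}$; any in $V\setminus N[u]$ would again produce, via the identity on $(u,\cdot)$, the forbidden $\ell=2\omega-4$. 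Hence the remaining $\omega-3$ common neighbors lie in $H$, so $|N_H(w_1)\cap N_H(y)|=\omega-3$. Combined with $|N_H(w_1)\cap N_H(y)|\leq \deg_H(w_1)=\ell-\omega+2\leq\omega-3$, equality is forced, so $N_H(w_1)\subseteq N_H(y)$. But $y\sim w_1$ in $H$, placing $y\in N_H(y)$, which is absurd. Hence $H$ is a clique and $\ell=k-2$.
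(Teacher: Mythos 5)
Your proof is correct and follows essentially the same route as the paper's: both fix a vertex of a maximum clique, split its neighbourhood into the clique part and the remainder $H$ (the paper's $R$), show that every vertex outside the maximum clique has at most one neighbour in it, show that $H$ is itself a clique of the forced size, and read off $\ell=k-2$. The only tactical difference is in how the cliqueness of $H$ is established (your exact count $a_{w_1y}=\omega-2$ together with the containment $N_H(w_1)\subseteq N_H(y)$, versus the paper's inequality $a_{v_1v_3}<\frac{\ell}{2}$ yielding $\ell<\ell$), but the decomposition and the concluding computation are the same.
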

\begin{proof}
Let $G$ be a strongly co-edge-regular graph with parameters $(n,k,c,\ell)$ and clique number $\omega$, where $c=2$ and $\omega>\frac{\ell+4}{2}$. Let $x$ be a vertex in a maximum clique in $G$. Denote by $\Delta(x)$ the local graph of $x$ in $G$. Assume $C$ is a maximum clique in $\Delta(x)$ with order $\omega'=\omega-1>\frac{\ell+2}{2}$. Define $R:=N_G(x)-C$ and $r:=|R|$.
\begin{claim}\label{noedgeCandR}
There is no edge between $C$ and $R$.
\end{claim}
\noindent{\bf Proof of Claim \ref{noedgeCandR}.} Assume there exists $u\in C$ and $v\in R$ such that $u\sim v$. There exists a vertex $u'\in C$ with $u\neq u'$ such that $u'\not\sim v$, as $C$ is a maximum clique.
\begin{figure}[h]
   \centering
    \begin{tikzpicture}
    \tikzstyle{every node}=[draw,circle,fill=white,minimum size=3pt,
                            inner sep=0pt]
                            {every label}=[\footnotesize]
   \draw (-1.299,0) node (1) [label=below:$x$] {}
       -- ++(25:1.5cm) node (2) [label=below:$u'$] {}
       -- ++(-25:1.5cm) node (3) [label=below:$u$] {}
       -- ++(205:1.5cm) node (4) [label=below:$v$] {}
       -- ++ (1);
   \draw (3) -- (1);
    \end{tikzpicture}
    \end{figure}

Note that
\begin{equation*}
\ell=a_{u'x}+a_{u'u}\geqslant2(\omega'-1)>2(\frac{\ell+2}{2}-1)=\ell,
\end{equation*}
as $|C|=\omega'>\frac{\ell+2}{2}$. This is a contradiction, which shows Claim \ref{noedgeCandR}.
\qed

Let $u$ be a vertex in $C$. Define
\begin{equation*}
W(u):=\{w \mid w\sim u, w\not\sim x\}.
\end{equation*}
By Claim \ref{noedgeCandR}, we obtain $|W(u)|=k-1-(\omega'-1)=k-\omega'=r$. Note that every vertex in $W(u)$ has exactly one neighbour in $C$, as $\ell-a_{xu}=\ell-(\omega'-1)<\ell-\frac{\ell}{2}<\frac{\ell}{2}<\omega'-1$. Then every vertex in $W(u)$ has exactly one neighbour in $R$, as $c=2$. By Claim \ref{noedgeCandR}, the vertex $v$ has no neighbours in $C$ for $v\in R$. Hence, $v$ has one neighbour in $W(u)$. So, $a_{xv}=\ell-a_{xu}<\frac{\ell}{2}$ for $v\in R$. It follows that any two distinct vertices in $R$ have no common neighbours outside $N_G(x)\cup\{x\}$.

\begin{claim}\label{Risclique}
The set $R$ forms a clique in $G$.
\end{claim}
\noindent{\bf Proof of Claim \ref{Risclique}.} Suppose that $v_1\neq v_2\in R$ are not adjacent. As $a_{xv_1}=a_{xv_2}=\ell-a_{xu}<\frac{\ell}{2}$ and $c=2$, we obtain $v_1$ and $v_2$ have a common neighbour $v_3$ in $N_G(x)$. By Claim \ref{noedgeCandR}, the vertex $v_3$ is in $R$.
\begin{figure}[h]
   \centering
    \begin{tikzpicture}
    \tikzstyle{every node}=[draw,circle,fill=white,minimum size=3pt,
                            inner sep=0pt]
                            {every label}=[\footnotesize]
   \draw (-1.299,0) node (1) [label=below:$x$] {}
       -- ++(25:1.5cm) node (2) [label=below:$v_1$] {}
       -- ++(-25:1.5cm) node (3) [label=below:$v_3$] {}
       -- ++(205:1.5cm) node (4) [label=below:$v_2$] {}
       -- ++ (1);
   \draw (3) -- (1);
    \end{tikzpicture}
    \end{figure}
As $a_{xv_1}=a_{xv_3}=\ell-a_{xu}<\frac{\ell}{2}$, the vertices $v_1$ and $v_3$ have no common neighbours outside $N_G(x)-\{x\}$. Hence, $a_{v_1v_3}\leqslant r-2+1<\frac{\ell}{2}$. It follows that
\begin{equation*}
\ell=a_{v_1x}+a_{v_1v_3}<\frac{\ell}{2}+\frac{\ell}{2}=\ell,
\end{equation*}
which is a contradiction. This shows Claim \ref{Risclique}.
\qed

Let $u$ be a vertex in $C$. Note that $a_{xu}=\omega'-1>\frac{\ell}{2}$, by Claim \ref{noedgeCandR}. Then there exists a vertex $v$ in $R$, such that $a_{xv}+a_{xu}=\ell$. By Claims \ref{noedgeCandR} and \ref{Risclique}, we have $a_{xv}=k-\omega'-1$. Hence, $\ell=k-2$. This finishes the proof of Theorem \ref{largec1}.
\end{proof}

\section{Main results}\label{MainResults}
\subsection{Strongly co-edge-regular graphs with large $\ell$}\label{4.1}

In this subsection, we show that a walk-regular and strongly co-edge-regular graph with parameters $(n,k,2,\ell)$ is a $p\times q$-grid, where $p+q=k+2$ or the $2$-clique extension of pentagon, if $\ell\geqslant \frac{3}{4}k$. Moreover, we show that there does not exist a strongly co-edge-regular graph with parameters $(n,k,2,\ell)$ and smallest eigenvalue at least $-3$, satisfying $k\geqslant120$ and $\ell<\frac{3}{4}k$.

First we consider the case where $G$ is a walk-regular and strongly co-edge-regular graph with $\ell=k-2$.

\begin{theorem}\label{l=k-2}
Let $G$ be a walk-regular and strongly co-edge-regular graph with parameters $(n,k$, $2,k-2)$. If $G$ contains an induced quadrangle, then $G$ is isomorphic to the Shrikhande graph or a $p\times q$-grid, where $p+q=k+2$.
\end{theorem}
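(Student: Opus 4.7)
The plan is to derive, at every vertex $x$, a sharp constraint on the local graph $\Delta(x):=N_G(x)$ from the induced quadrangle and the $\ell=k-2$ rule, then to extract a dichotomy---$G$ is strongly regular or $\Delta(x)$ is a disjoint union of two cliques---and match each branch to one of the graphs listed.

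The first step is the quadrangle identity. Given an induced quadrangle $x\sim u\sim z\sim v\sim x$, one has $N_G(x,z)=\{u,v\}$ by $c=2$, so the $\ell=k-2$ rule applied to $(x,z)$ gives
\[
a_{xu}+a_{xv}=k-2,
\]
and applying it to $(u,v)$ in parallel shows that opposite edges of the quadrangle carry equal $a$-values. More generally, for any vertex $x$ and any non-adjacent pair $u,v\in\Delta(x)$ whose other common neighbour $w$ satisfies $x\not\sim w$ (the quadrangle case), the same identity $a_{xu}+a_{xv}=k-2$ holds. I also note that $\Delta(x)$ cannot be complete, for otherwise $c=2$ would force $k\le 0$.

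I then perform a case-split on whether the function $y\mapsto a_{xy}$ on $\Delta(x)$ is constant. In the constant case, Lemma~\ref{SRG} forces $G$ to be strongly regular, and the identity above pins the common value at $(k-2)/2$. The SRG relation $k(k-1-a)=c(n-k-1)$ yields $n=((k+2)/2)^{2}$, and Lemma~\ref{SRGspectrum} gives the spectrum $\{k,(k-2)/2,-2\}$. Since the smallest eigenvalue is $-2$, Lemma~\ref{atleast-2}(ii) together with Table~\ref{table} and $c=2$ leaves only the $p\times p$-grid (where $k=2p-2$) and the Shrikhande graph. In the non-constant case, I would use the identity and Lemma~\ref{twoconstants} to show that exactly two values $\alpha$ and $\beta=k-2-\alpha$ (with $\alpha\neq\beta$) appear among the $a_{xy}$; vertices of $\Delta(x)$ carrying the same value are forced to be mutually adjacent, so $\Delta(x)$ is a disjoint union of two cliques $A\dot\cup B$, and a degree count gives $|A|=\alpha+1$ and $|B|=k-1-\alpha$ with no cross-edges. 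Setting $p=\alpha+2$ and $q=k-\alpha$ gives $p+q=k+2$ and $p\neq q$.

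To finish the non-SRG branch I would identify $G$ with the $p\times q$-grid through its row/column clique partition. For each edge $xy$ with $a_{xy}=p-2$ the set $\{x,y\}\cup N_G(x,y)$ is a $p$-clique, and these $p$-cliques form an edge partition; the edges with $a_{xy}=q-2$ partition into $q$-cliques in the same way. Since $p\neq q$, a $p$-clique and a $q$-clique cannot share more than one vertex, so the two families are the rows and columns of a grid and $G$ is the Cartesian product of $K_{p}$ and $K_{q}$. Walk-regularity, via Lemma~\ref{twoconstants}, guarantees that $(p,q)$ does not vary with $x$. The main obstacle is the ``diamond'' configuration---a non-adjacent pair in $\Delta(x)$ whose second common neighbour is adjacent to $x$---where the $\ell$-rule does not directly give the identity. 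Diamonds really do occur in the Shrikhande graph, whose local graph is $C_{6}$, but there all $a$-values already coincide. In the non-SRG situation, the two-clique structure derived above forces the second common neighbour of any cross pair to lie outside $\Delta(x)\cup\{x\}$, so diamonds never arise and the quadrangle identity is available throughout.
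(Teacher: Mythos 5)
Your overall architecture matches the paper's: split on whether $y\mapsto a_{xy}$ is constant on $\Delta(x)$, use Lemma~\ref{SRG} and Lemma~\ref{atleast-2}(ii) in the constant case (that branch is correct and is exactly the paper's Case~1), and show $\Delta(x)\cong K_{s+1}\dot\cup K_{t+1}$ in the non-constant case. But the non-constant branch has a genuine gap, and it is precisely the ``diamond'' obstacle you flag at the end. Your derivation of the two-clique structure rests on the assertion that vertices of $\Delta(x)$ carrying the same value $\alpha$ are pairwise adjacent. For a non-adjacent same-value pair $w_1,w_2$ the quadrangle identity only applies if their second common neighbour $w_3$ lies outside $N_G(x)\cup\{x\}$; if $w_3\sim x$ (a diamond), the $\ell$-rule on $(w_1,w_2)$ gives $a_{w_1w_3}=a_{w_2w_3}=k-2-\alpha$ and no contradiction. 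You then dismiss diamonds by appealing to ``the two-clique structure derived above'' --- but that structure was derived assuming same-value vertices are adjacent, i.e.\ assuming diamonds on same-value pairs do not occur. The argument is circular as written, and nothing in your sketch breaks the circle. (A smaller issue: ``exactly two values appear'' does not follow from the identity plus Lemma~\ref{twoconstants} alone --- two moments do not pin a distribution to two values; one needs the explicit quadrangle construction through each $w_1\in N_G(x)$, as in the paper's Claim~\ref{sort}.)

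The paper closes this gap with an idea absent from your proposal: it exploits the asymmetry $s>t$. Since each vertex of $N_G(x,v)$ has at most one neighbour in $N_G(x,u)$ (by $c=2$) and $|N_G(x,v)|=t<s=|N_G(x,u)|$, some $z\in N_G(x,u)$ has no neighbour in $N_G(x,v)$. Pairing $z$ with $v$ produces a genuine quadrangle (their second common neighbour is forced outside $N_G(x)$), so $a_{xz}=\ell-a_{xv}=s$; since $N_G(x,z)$ must then fill all of $\{u\}\cup(N_G(x,u)\setminus\{z\})$, every pair in $N_G(x,u)\setminus\{z\}$ acquires the three common neighbours $x,u,z$, and $c=2$ forces $\{u\}\cup N_G(x,u)$ to be a clique; the rest (no cross edges, the $v$-side clique, propagation to all vertices) then follows. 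You need this pivot vertex $z$, or some substitute for it, before your two-clique decomposition and the grid identification can stand.
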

\begin{proof}
Let $G$ be a walk-regular and strongly co-edge-regular graph with parameters $(n,k,c,\ell)$, such that $\ell=k-2$ and $c=2$. Assume that $G$ contains an induced quadrangle, say $x\sim u\sim y \sim v \sim x$. Assume further, the number of common neighbours of $x,u$ and $x,v$ satisfy $s=a_{xu}\geqslant a_{xv}=t$. As $c=2$, we have $N_G(x,u)\cap N_G(x,v)=\emptyset$ and $s+t=a_{xu}+a_{xv}=\ell=k-2$. Note that $a_{yu}=\ell-a_{xu}=\ell-s=t$ and $a_{yv}=\ell-a_{xv}=\ell-t=s$.
\begin{claim}\label{sort}
The number $a_{xw}\in\{s,t\}$ for any $w\in N_G(x)$.
\end{claim}
\noindent{\bf Proof of Claim \ref{sort}.} Let $w_1$ be a vertex in $N_G(x)-\{u,v\}$. By $a_{xu}+a_{xv}=\ell=k-2$, we have $w_1$ is adjacent to $u$ or $v$. Without loss of generality, we may assume $w_1\sim u$. As $c=2$, we have $w_1\not\sim y$. Then there exists a vertex $w_2\in N_G(w_1,y)-\{u\}$. By $a_{yu}+a_{yv}=\ell=k-2$, we have $w_2$ is adjacent to $u$ or $v$. Note that $w_2$ is not adjacent to $x$, as $c=2$. So we find
\begin{equation*}
a_{xw_1}=\left\{
 \begin{array}{rl}
\ell-a_{xu}=t,& \text{if $w_2\sim u$},\\
\ell-a_{xv}=s,& \text{if $w_2\sim v$}.
 \end{array}
 \right.
\end{equation*}
This shows the claim.
\qed

Now we consider the following two cases.

{\bf Case 1.} $s=t$.

In this case, by Lemma \ref{SRG}, we have $G$ is a strongly regular graph with parameters $(n,k,\frac{k-2}{2},2)$. Then by Lemma \ref{SRGspectrum}, the three distinct eigenvalues of $G$ are $k, \frac{k-2}{2}$ and $-2$. Hence, $G$ is isomorphic to the Shrikhande graph or the $(s+2)\times (s+2)$-grid, by Lemma \ref{atleast-2}.

{\bf Case 2.} $s>t$.

Let $w$ be a vertex in $N_G(x,v)$. By $c=2$, we obtain $w\not\sim u$ and $w$ has at most one neighbour in $N_G(x,u)$ for all $w\in N_G(x,v)$, and similarly, for all $w'\in N_G(x,u)$, we have $w'\not\sim v$ and $w'$ has at most one neighbour in $N_G(x,v)$. Then there exists at least one vertex $z\in N_G(x,u)$ such that $z$ has no neighbours in $N_G(x,v)$, as $a_{xu}=s>t=a_{xv}$. Note that $z\not\sim v$. There exists a vertex $z_1$ such that $N_G(z,v)=\{x,z_1\}$. Note, $z_1$ is not adjacent to $x$, as $z$ has no neighbour in $N_G(x,v)$. Hence, $a_{xz}=\ell-a_{xv}=s$, which implies that $z$ is adjacent to all vertices in $N_G(x,u)-\{z\}$. Therefore, any two distinct vertices in $N_G(x,u)-\{z\}$ have at least three common neighbours, namely, $x,u,z$. By $c=2$, we obtain that the subgraph induced on $\{u\}\cup N_G(x,u)$ is a clique with valency $s$. It follows that there are no edges between $N_G(x,u)$ and $N_G(x,v)$. We find $a_{xw}\leqslant 1+|N_G(x,v)-\{w\}|\leqslant a_{xv}=t$ for all $w\in N_G(x,v)$. By Claim \ref{sort}, we have $a_{xw}=t$ for $w\in N_G(x,v)$. This shows that the subgraph induced on $\{v\}\cup N_G(x,v)$ is a clique with valency $t$. Hence, the local graph of $x$ in $G$ is isomorphic to $K_{s+1}\dot\cup K_{t+1}$.

Let $u'$ (resp. $v'$) be a vertex in $N_G(x,u)$ (resp. $N_G(x,v)$), and $x,y'$ be the common neighbours of $u',v'$. Then the subgraph induced on $\{x,u',y',v'\}$ is a quadrangle. Hence, every neighbour of $x$ lies on a quadrangle. It follows that every vertex in $G$ lies on a quadrangle. This shows that the local graph of any vertex of $G$ is isomorphic to $K_{s+1}\dot\cup K_{t+1}$. This shows that $G$ is a $(s+2)\times (t+2)$-grid.

This completes the proof of the theorem.
\end{proof}

\begin{theorem}\label{atleast3/4k}
There does not exist a strongly co-edge-regular graph with parameters $(n,k,2,\ell)$ which contains an induced quadrangle and satisfies $\frac{3k}{4}\leqslant \ell\leqslant k-3$.
\end{theorem}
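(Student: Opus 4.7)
The plan is to assume such a graph $G$ exists and exhibit a clique of size strictly greater than $(\ell+4)/2$; by Theorem \ref{largec1} this would force $\ell = k-2$, contradicting $\ell \leq k-3$. Fix an induced quadrangle $x \sim u \sim y \sim v \sim x$, set $s = a_{xu}$, $t = a_{xv}$ with $s \geq t$; the $\ell$-condition applied to $\{x,y\}$ gives $s + t = \ell$. Partition $N_G(x) = \{u, v\} \cup A \cup B \cup R$ (a disjoint union) with $A = N_G(x, u)$, $B = N_G(x, v)$, so $|A| = s$, $|B| = t$, $|R| = r := k - 2 - \ell \geq 1$. Repeated application of $c = 2$ yields three structural facts: $(a)$ both $A$ and $B$ are disjoint unions of cliques (two non-adjacent vertices in $A$ have $\{x, u\}$ as their only common neighbors, so $A$ admits no length-two path between non-adjacent vertices); $(b)$ the edges between $A$ and $B$ form a matching; $(c)$ each vertex of $R$ has at most one neighbor in $A$ and at most one in $B$.

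Call $w_1 \in A$ \emph{good} if it has no neighbor in $B$; by $(b)$ there are at least $s - t$ such vertices. For a good $w_1$ in the $A$-clique $C_i$ of size $a_i$, let $w_1'$ be defined by $N_G(w_1, v) = \{x, w_1'\}$. Goodness forces $w_1' \notin N_G(x)$, so the $\ell$-condition on the non-adjacent pair $\{x, w_1'\}$, whose common neighbors equal $\{w_1, v\}$ by $c = 2$, gives $a_{xw_1} = s$. Expanding $a_{xw_1}$ as the degree of $w_1$ inside the local graph of $x$ yields $a_i + \beta_{w_1} = s$, where $\beta_{w_1} := |N_G(w_1) \cap R|$; hence $\{x, u\} \cup C_i$ is a clique of size $s - \beta_{w_1} + 2$, and this strictly exceeds $(\ell+4)/2$ exactly when $\beta_{w_1} < (s - t)/2$. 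In Case 2 ($s > t$), a good $w_1$ with $\beta_{w_1} = 0$ makes $A = C_i$ a single clique of size $s$, giving a clique $\{x, u\} \cup A$ of size $s + 2 > (\ell+4)/2$; so every good $w_1$ satisfies $\beta_{w_1} \geq (s - t)/2 \geq 1$. Summing over the at least $s - t$ good vertices against $\sum_{w_1 \in A} \beta_{w_1} = e(R, A) \leq r$ gives the squeezes $s - t \leq r$ and $(s - t)^2 \leq 2r$, forcing $s \leq (k-2)/2$ and, using $\ell \geq 3k/4$, placing both $s$ and $t$ in $(k/4, k/2)$.

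The main obstacle is the remaining balanced regime, together with Case 1 ($s = t$) where the clique inequality is not strict and the $A$-$B$ matching may be perfect (so that no good vertex exists). To close this, I would iterate: each good $w_1 \in A$ lies in its own induced quadrangle $w_1 \sim x \sim v \sim w_1' \sim w_1$, so the same structural analysis can be reapplied at $w_1$ to yield additional constraints on the cliques of $A$ and on $R$. This should be combined with the $\ell$-condition applied to non-adjacent pairs $(w_1, \psi(w_1'))$ for $w_1, w_1'$ in a common $A$-clique, where $\psi : A \to B$ denotes the matching from $(b)$: this forces $\beta_{w_1}$ to be constant on each $A$-clique and yields a linear identity of the shape $2a_i + 1 + \beta_i + \sigma_i = \ell$ with $\sigma_i \geq 0$ counting common neighbors of $w_1, w_1'$ outside $\{x\} \cup N_G(x)$. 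Finally, the hypothesis $\ell \geq 3k/4$ plays a second essential role in ruling out that a non-adjacent pair of $R$-vertices has a common neighbor outside $\{x\} \cup N_G(x)$, via the inequality $2(k - 1 - \ell) < \ell$. Piecing these ingredients together should squeeze the admissible parameters to impossibility and complete the contradiction.
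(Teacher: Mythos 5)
Your structural setup is sound, and the partial facts you derive are correct: the decomposition $N_G(x)=\{u,v\}\cup A\cup B\cup R$, the clique decomposition of $A$, the matching between $A$ and $B$, the identity $a_{xw_1}=s$ for good $w_1$, and the resulting bounds $s-t\leqslant r$ and $(s-t)^2\leqslant 2r$. But the proposal is not a proof. You explicitly leave open Case 1 ($s=t$, where no good vertex need exist) and the balanced regime of Case 2 where your own bounds place $k/4<t\leqslant s<k/2$, and the plan you sketch for closing these (``iterate the analysis at $w_1$'', ``this should squeeze the admissible parameters to impossibility'') is not carried out and is far from routine. Note that these unresolved cases are not fringe: already for $\ell=k-3$ your inequality $s-t\leqslant r=1$ forces every admissible configuration into exactly the regime you cannot handle, so the gap covers essentially the whole theorem.

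The idea you are missing is the one the paper's proof is built around: exploit the fourth vertex $y$ of the quadrangle. Writing $\ell=k-d$ and $W:=N_G(x)-(U\cup V)$ (your $R$), each $w\in W$ is non-adjacent to $y$ and so has exactly two common neighbours with $y$; collecting these into $Z:=\bigcup_{w\in W}N_G(w,y)$ and double-counting (using $a_{yu}+a_{yv}=\ell$ and that $u,w$ share only $x$ and at most one vertex of $Z$) shows that $a_{xw}=a_{xv}$ for every $w\in W$ and that $w$ has no neighbour in $U$. From this one reads off $a_{xv}=a_{xw}\leqslant|W|=d-2$, hence $a_{xu}=\ell-a_{xv}>k/2$: the configuration is automatically extremely unbalanced, which eliminates your Case 1 and your balanced regime at a stroke (indeed it directly contradicts your own correct conclusion $t>k/4$). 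The paper then finishes not via Theorem \ref{largec1} but by an injection: Lemma \ref{clique} makes $U'=\{u'\sim x\mid a_{u'x}=a_{ux}\}$ a clique inside $U$, and for a fixed $w\in W$ every non-neighbour $z'$ of $x$ adjacent to $w$ has exactly one neighbour in $U'$ while each vertex of $U'$ sees at most one such $z'$, giving $|U|\geqslant|U'|\geqslant k-d+2$ against the trivial count $|U|\leqslant k-d+1$. Without an argument of this kind that pins down $a_{xw}$ for the vertices of $R$, your approach stalls exactly where you say it does.
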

\begin{proof}
Let $G$ be a strongly co-edge-regular graph with parameters $(n,k,c,\ell)$, such that $\frac{3k}{4}\leqslant \ell= k-d$ and $c=2$, where $\frac{k}{4}\geqslant d\geqslant3$ is an integer. Assume that $G$ contains an induced quadrangle, say $x\sim u\sim y \sim v \sim x$. Without loss of generality, we may assume that $a_{xu}\geqslant a_{xv}$. Define
\begin{equation*}
  \begin{array}{rl}
    U :=& N_G(x,u)\cup\{u\}, \\
    V :=& N_G(x,v)\cup\{v\}, \\
    W :=& N_G(x)-(U\cup V), \\
    Z :=& \cup_{w\in W}N_G(w,y).
  \end{array}
\end{equation*}

Note that $|W|=k-2-\ell=d-2$, as $\ell=k-d$. As $c=2$, the set $W\cap N_G(y)$ is empty and every vertex in $W$ has two neighbours in $Z$.
\begin{claim}\label{axw=axv}
If $\ell>2(d-1)$, then for every $w\in W$, $a_{xw}=a_{xv}$ holds, and for all $u'\in U$ we have $w\not\sim u'$.
\end{claim}
\noindent{\bf Proof of Claim \ref{axw=axv}.} Assume that $\ell>2(d-1)$. Note that, for $w\in W$, we have $a_{xw}\leqslant k-(\ell+2)-1+2=d-1$, as $x,w,u$ (resp. $x,w,v$) have at most one common neighbour. Then for $z\in Z$, the vertex $z$ has exactly one neighbour in $W$ as $\ell>2(d-1)$. Hence, $|Z|=2|W|=2(d-2)$. Since $z\not\sim x$ and $a_{xv}+a_{xw}\leqslant \frac{\ell}{2}+d-1<\ell$, we obtain $z\not\sim v$ for $z\in Z$.
As \begin{equation*}
\begin{array}{rcl}
  k&= & |W|+2+a_{xu}+a_{xv}=|W|+2+a_{yu}+a_{yv}, \\
  |Z|&= & 2|W|=2(d-2)
\end{array}
\end{equation*}
and for all $z\in Z$ $z\not\sim v$, we find $|N_G(y,u)\cap Z|\geqslant d-2$.

For $w\in W$, the vertices $u$ and $w$ have at most one common neighbour in $Z$, as $c=2$ and $x$ is a common neighbour of $u$ and $w$. Thus, $|N_G(y,u)\cap Z|=|W|=d-2$ and $w,u$ have exactly one common neighbour in $Z$ for $w\in W$. It follows that $a_{xw}=\ell-a_{xu}=a_{xv}$ for $w\in W$.
Moreover, the vertices $w$ and $u'$ are non-adjacent for $u'\in U$ and $w\in W$.
This finishes the proof of Claim \ref{axw=axv}.
\qed

As $\ell \geqslant \frac{3k}{4}$ and $d \leqslant \frac{k}{4}$, we have $\ell \geqslant 3d > 2(d-1)$.
Fix $w \in W$. Then $w$ and $v$ have at most one neighbour in $V$, as $c=2$.
This means that $a_{xv} = a_{xw} \leqslant |W| = d-2$, by Claim \ref{axw=axv}.
It follows that $a_{xu} = \ell - a_{xv} \geqslant \ell - (d-2) > \frac{3k}{4}  -\frac{k}{4} = \frac{k}{2}$.
By Lemma \ref{clique}, the set $U' = \{u' \sim x \mid a_{u'x} = a_{ux}\}$ is a clique. In particular, $U' \subseteq U$ holds.
Let $Z' := \{ z' \neq x \mid z' \not\sim x, z' \sim w\}$. We find
$|Z'| \geqslant  k - a_{xw} \geqslant k -d +2.$
As $z'\in Z'$ has exactly one neighbour in $U'$  and $u' \in U'$ has at most one neighbour in $Z'$, we find that
$|U| \geqslant |U'| \geqslant |Z'| \geqslant k-d+2.$
On the other hand
\begin{equation*}
|U| \leqslant k - a_{xv} -1 - |W| \leqslant k - 1- d+2 = k-d+1,
\end{equation*}
which is a contradiction. This finishes the proof of the theorem.
\end{proof}

Note that Theorem \ref{grid} immediately follows from Lemma \ref{noquadrangles}, Theorem \ref{l=k-2} and Theorem \ref{atleast3/4k}. 
Now we show that, if $G$ is a strongly co-edge-regular graph with parameters $(n,k,2,\ell)$ and smallest eigenvalue at least $-3$, then $G$ is a $t\times s$-grid or $k$ is small.

\begin{theorem}\label{l<3k/4}
There is no strongly co-edge-regular graph with smallest eigenvalue $\theta_{\min}$ and parameters $(n,k,2,\ell)$ satisfying $\theta_{\min}\geqslant-3$, $k\geqslant 120$ and $\ell<\frac{3k}{4}$.
\end{theorem}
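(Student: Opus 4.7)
My plan is to suppose, for contradiction, that a strongly co-edge-regular graph $G$ with parameters $(n,k,2,\ell)$ satisfies $\theta_{\min}(G)\geqslant -3$, $k\geqslant 120$, and $\ell<\frac{3k}{4}$, and to produce from $G$ a forbidden induced subgraph listed in Lemma \ref{forbiddensubgraphs}. The argument naturally splits according to whether $G$ contains an induced quadrangle.

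I would first dispose of the no-quadrangle case. Then $G$ is a Terwilliger graph of diameter at least $2$, so by Lemma \ref{s-clique}, $G$ is the $s$-clique extension of a non-complete strongly regular graph $H$ with parameters $(n_H,k_H,a_H,c_H)$, where necessarily $sc_H=c=2$. When $(s,c_H)=(2,1)$, a direct computation on the $2$-clique extension gives $\ell=4a_H+4$ and $k=2k_H+1$; Proposition \ref{k bound}(i) together with the feasibility restrictions on strongly regular graphs with $c_H=1$ (for which $k_H\in\{2,3,7,57\}$) forces $k\leqslant 115<120$, a contradiction. When $(s,c_H)=(1,2)$, the graph $G$ itself is strongly regular and Terwilliger with $c=2$ and $\ell=2a$; Proposition \ref{k bound}(ii) then forces $k>7a$, and the combined restrictions from $\theta_{\min}\geqslant -3$ (via Lemma \ref{atleast-2} and, for integer eigenvalues, Lemma \ref{secondev}) together with $k\geqslant 120$ rule out the remaining parameter sets.

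With an induced quadrangle $x\sim u\sim y\sim v\sim x$ in hand, I would follow the template of Theorem \ref{atleast3/4k}: set $U=N_G(x,u)\cup\{u\}$, $V=N_G(x,v)\cup\{v\}$, and $W=N_G(x)\setminus(U\cup V)$, so that $|W|=k-2-\ell>\tfrac{k}{4}-2$ under the current hypothesis. Using $c=2$ and the identity $a_{xu}+a_{xv}=\ell$, I would analyze adjacencies among $U,V,W$ and stratify $N_G(x)$ by the values $a_{xy}$. Lemma \ref{clique} bounds $\{y\sim x:a_{xy}\geqslant k/2\}$ by a clique, and Theorem \ref{largec1} bounds the clique number of $G$ by $\tfrac{\ell+4}{2}<\tfrac{3k}{8}+2$. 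The goal is to locate, inside the local graph $\Delta(x)$, an induced copy of $2K_s\dot\cup tK_1$ with $(s+2)(t-3)>12$; the induced cone $C(2K_s\dot\cup tK_1)$ then lies in $G$, contradicting Lemma \ref{forbiddensubgraphs}(ii) via $\theta_{\min}(G)\geqslant -3$. In degenerate configurations I would instead aim for one of the specific cones forbidden by Lemma \ref{forbiddensubgraphs}(iii).

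The principal obstacle is expected to be the combinatorial case analysis in the quadrangle case: because $\ell$ may lie anywhere below $\tfrac{3k}{4}$, the distribution of the values $a_{xy}$ over $N_G(x)$ is not rigidly determined, and the disjoint-cliques-plus-coclique structure inside $\Delta(x)$ must be extracted carefully through a case split based on the sizes of $a_{xu}$ and $a_{xv}$ relative to $k/2$. The large hypothesis $k\geqslant 120$ is precisely what provides the slack needed to push the resulting parameters $s,t$ past the threshold $(s+2)(t-3)>12$, closing the contradiction.
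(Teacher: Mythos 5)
Your proposal has a genuine gap in the main case, and the overall strategy cannot close as stated. The decisive point is that the contradiction in the paper's proof of Theorem \ref{l<3k/4} is \emph{not} obtained from a forbidden induced subgraph. The paper works directly in the local graph $\Delta(x)$: it takes a maximum independent set $\{z_1,\dots,z_t\}$, partitions $N_G(x)$ into cliques $C_i$ (vertices adjacent to exactly one $z_i$) plus a small remainder $R$ with $|R|\leqslant\binom{t}{2}$, shows there is at most one edge between distinct $C_i$ and $C_j$, and uses the cones of Lemma \ref{forbiddensubgraphs} together with the clique bound of Theorem \ref{largec1} to force $t=3$ and then $\Delta(x)\cong 3K_{k/3}$. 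But a graph that is locally $3K_{k/3}$ with $c=2$ is a putative strongly regular graph with parameters $\bigl(\tfrac{k^2+3k+3}{3},k,\tfrac{k-3}{3},2\bigr)$; it satisfies $\ell=\tfrac{2(k-3)}{3}<\tfrac{3k}{4}$ and its smallest eigenvalue $\tau=\tfrac{(k-9)-\sqrt{k^2+18k+9}}{6}$ lies strictly between $-3$ and $-2$, so it contains no induced subgraph with smallest eigenvalue below $-3$. The paper excludes it by a spectral feasibility argument (integrality of the multiplicity difference forces $k^2+18k+9$ to be $9$ times a perfect square, hence $k=0$). Your plan of ``always locate a cone $C(2K_s\dot\cup tK_1)$ or one of the specific forbidden cones'' therefore cannot terminate: you must add an algebraic step for this surviving configuration.

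There is also a gap in how you propose to extract the forbidden structure. With $\ell<\tfrac{3k}{4}$ you can no longer guarantee $a_{xu}\geqslant\tfrac{k}{2}$, so Lemma \ref{clique} does not apply and the sets $N_G(x,u)$, $N_G(x,v)$ from a quadrangle need not be (nearly) cliques; the ``two large cliques plus a large coclique'' pattern does not come out of the $U,V,W$ decomposition of Theorem \ref{atleast3/4k}, which is exactly why the paper switches to the maximum-independent-set decomposition. Two smaller remarks on your no-quadrangle case: the restriction $k_H\in\{2,3,7,57\}$ is the Moore-graph bound (valid only when $a_H=0$), not a general fact about strongly regular graphs with $c_H=1$; the correct route is that the $2$-clique extension having smallest eigenvalue at least $-3$ forces $\theta_{\min}(H)\geqslant-2$, whence Lemma \ref{atleast-2} leaves only the pentagon and the Petersen graph among $c_H=1$ graphs, giving $k\leqslant 7$. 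And Proposition \ref{k bound}(ii) gives $k>7a$ only after discarding the pentagon and Petersen, after which the $\theta_{\min}\geqslant-3$ analysis (conference graph or integral $\theta_2\in\{-2,-3\}$ via Lemmas \ref{conferencegraph} and \ref{secondev}) does bound $k$ below $120$; that part of your sketch is workable but needs to be carried out.
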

\begin{proof}
Let $G$ be a strongly co-edge-regular graph with parameters $(n,k,2,\ell)$, satisfying $k\geqslant120$ and $\theta_{\min}\geqslant-3$. Let $x$ be a vertex in $G$. Consider $\Delta(x)$, the local graph of $x$ in $G$. Let $\{z_1,\ldots,z_t\}$ be a maximum independent set in $\Delta(x)$. Define
\begin{equation*}
\begin{array}{rl}
C_i:=&\{w\mid w\sim x,\ w\sim z_i,\ w\not\sim z_j,\ j\neq i\}\cup\{z_i\},\ i=1,\ldots,t,\\
R:=&N_G(x)-\cup_{i=1}^{t}C_i.
\end{array}
\end{equation*}
Then $C_i$ forms a clique for $i=1,\ldots,t$, as $\{z_1,\ldots,z_t\}$ is a maximum independent set. Let $c_i:=|C_i|$ for $i=1,\ldots, t$ and $r=|R|$. Without loss of generality, we may assume $c_1\geqslant\cdots\geqslant c_t$. Note that every vertex in $R$ has at least two neighbours in $\{z_1,\ldots,z_t\}$. Then $r\leqslant\binom{t}{2}$, as $c=2$.

\begin{claim}\label{oneedge}
There exists at most one edge between $C_i$ and $C_j$ for $1\leqslant i\neq j\leqslant t$.
\end{claim}
\noindent{\bf Proof of Claim \ref{oneedge}.} Note that every vertex in $C_i$ has at most one neighbour in $C_j$ and vice versa, as $c=2$. Suppose there are two disjoint edges between $C_i$ and $C_j$. Then $\Delta(x)$ contains an induced quadrangle. This is impossible, as $c=2$. This shows Claim \ref{oneedge}.
\qed

\begin{claim}\label{t=3}
We have $t=3$.
\end{claim}
\noindent{\bf Proof of Claim \ref{t=3}.} First, we show that $t\leqslant8$ holds. Let $v$ be a vertex in $G$, such that $v\sim z_1$ and $v\not\sim x$. Then the subgraph of $G$ induced on $\{x,v,z_1,\ldots,z_t\}$ is bipartite. As $\theta_{\min}\geqslant-3$, we obtain $t\leqslant8$, by Lemma \ref{forbiddensubgraphs} ($\mathrm{i}$).
By Claim \ref{oneedge}, there exist at least $c_2-1$ vertices in $C_2$ which have no neighbours in $C_1$.
Note that by Theorem \ref{largec1}, we have $c_1\leqslant\frac{\ell+2}{2}<\frac{3k+8}{8}$, as $\ell<\frac{3k}{4}<k-2$.

Hence, we obtain
\begin{equation}\label{c_2}
\begin{array}{rl}
  c_2& \geqslant\frac{k-c_1-r}{t-1} \\
   & >\frac{5k-8-4t(t-1)}{8(t-1)} \\
   & \geqslant\frac{5\times 120-8-4t(t-1)}{8(t-1)} \\
   &\geqslant\left\{
            \begin{array}{rl}
              6.5, & \text{if } t=8, \\
              8.8, &  \text{if } t=7, \\
              11.8, & \text{if } t=6, \\
              16, &  \text{if } t=5, \\
              22.6, &  \text{if } t=4,
            \end{array}
          \right.
\end{array}
\end{equation}
as $k\geqslant120$.

Now we show that $t\leqslant5$. Assume that $t\geqslant6$. There exists at most one edge between $C_1$ and $C_2$, by Claim \ref{oneedge}. It follows that $C(2K_6\dot\cup 6K_1)$, $C(2K_8\dot\cup 5K_1)$ or $C(2K_{11}\dot\cup 4K_1)$ is a subgraph of $G$ induced on a subset of $C_1\cup C_2\cup\{x,z_3,\ldots,z_t\}$ for $t=8,7$ and $6$, respectively. This is a contradiction, by Lemma \ref{forbiddensubgraphs} ($\mathrm{ii}$).


For $4\leqslant t\leqslant 5$, we have
\begin{equation}\label{c_3}
\begin{array}{rl}
  c_3 & \geqslant\frac{k-c_1-c_2-r}{t-2} \\
   & >\frac{k-8-2t(t-1)}{4(t-2)} \\
   & \geqslant\frac{120-8-2t(t-1)}{4(t-2)} \\
   &=\left\{
            \begin{array}{rl}
              6, &  \text{if } t=5, \\
              11, &  \text{if } t=4,
            \end{array}
          \right.
   \end{array}
\end{equation}
as $c_2\leqslant c_1<\frac{3k+8}{8}$ and $k\geqslant120$.

There exists at most one edge between $C_i$ and $C_j$ for $1\leqslant i\neq j\leqslant3$, by Claim \ref{oneedge}. It follows from (\ref{c_2}) and (\ref{c_3}) that 
\begin{equation*}
         \begin{array}{ll}
         c_1\geqslant c_2 \geqslant 17\text{ and }c_3\geqslant 7,&\text{ if }t=5,\text{ and} \\
         c_1\geqslant c_2 \geqslant 23\text{ and }c_3\geqslant 12,&\text{ if }t=4.
       \end{array}
       \end{equation*}

This shows that $C(2K_{15}\dot\cup K_7\dot\cup 2K_1)$ or $C(2K_{21}\dot\cup K_{12}\dot\cup K_1)$ is an induced subgraph of $G$, and this is a contradiction, by Lemma \ref{forbiddensubgraphs} ($\mathrm{iii}$). This finishes the proof of Claim \ref{t=3}.
\qed

\begin{claim}\label{localgraph}
The local graph of $x$ in $G$, $\Delta(x)$, is isomorphic to $3K_{\frac{k}{3}}$.
\end{claim}
\noindent{\bf Proof of Claim \ref{localgraph}.} We have $N_G(x)=C_1\cup C_2\cup C_3\cup R$, by Claim \ref{t=3}. Now we consider two cases, namely, $R=\emptyset$ and $R\neq\emptyset$.

First, we assume that $R=\emptyset$. As $c=2$, we obtain $\ell=a_{xz_i}+a_{xz_j}=(c_i-1)+(c_j-1)=c_i+c_j-2$ for $1\leqslant i<j \leqslant3$. Hence, $c_1=c_2=c_3=\frac{k}{3}$ and $\ell=2\times\frac{k-3}{3}=\frac{2k-6}{3}$. Let $v$ be a vertex in $\Delta(x)$. Then $a_{xv}\geqslant\frac{k-3}{3}$. By Claim \ref{oneedge}, we have $a_{xv}\leqslant (c_1-1)+2=\frac{k+3}{3}<k-1$, as $k\geqslant120$. This means $v$ has a neighbour $z$ outside $\{x\}\cup N_G(x)$.

Let $u,v$ be the two common neighbours of vertices $x$ and $z$. Then
\begin{equation*}
\frac{2k-6}{3}=\ell=a_{xu}+a_{xv}\geqslant \frac{k-3}{3}+\frac{k-3}{3}=\frac{2k-6}{3}.
\end{equation*}
This means that $a_{xv}=\frac{k-3}{3}$, and there exist no edges between $C_i$ and $C_j$ for $1\leqslant i,j\leqslant3$. Therefore, $\Delta(x)\cong 3K_{\frac{k}{3}}$.

Now assume that $R\neq\emptyset$. Let $w$ be a vertex in $R$. As $c=2$, note that $w$ either has at most one neighbour in $C_i$ or is adjacent to all vertices in $C_i$ for each $i=1,2,3$.  By Theorem \ref{largec1}, we have $c_1+c_2\leqslant\frac{\ell+2}{2}+\frac{\ell+2}{2}=\ell+2$. Then
\begin{equation*}
  c_3=k-(c_1+c_2+r)\geqslant k-(\ell+2+\binom{t}{2})>k-\frac{3k}{4}-5=\frac{k}{4}-5\geqslant 25,
\end{equation*}
as $t=3$, $\ell<\frac{3k}{4}$ and $k\geqslant 120$. As there is at most one edge between $C_i$ and $C_j$, we see that there exists clique $C_i'\subseteq C_i$ of order $c_i-1$ such that the induced subgraph on $V(C_1')\cup V(C_2')\cup V(C_3')$ is $K_{c_1-1}\dot\cup K_{c_2-1}\dot\cup K_{c_3-1}$. Note $c_1-1\geqslant c_2-1\geqslant c_3-1\geqslant 24$. If $w$ has at most one neighbour in each $C_i$ for $i=1,2,3$, then we can find $3$ vertices $w_1'\in C_1'$, $w_2'\in C_2'$ and $w_3'\in C_3'$, such that $w_i'\not\sim w_j'$ for $i\neq j$ and $w_i'\not\sim w$ for all $i=1,2,3$. So we are back to the case $t\geqslant 4$. If $w$ is adjacent to all vertices of $C_1\dot\cup C_2\dot\cup C_3$, then we see that $\Delta(x)$ contains an induced $C(3K_5)$. This is a contradiction with Lemma \ref{forbiddensubgraphs} ($\mathrm{iii}$). In similar fashion, we obtain that there exists an induced $K_{13}\cup C(2K_{13})$ in $\Delta(x)$, if $w$ is adjacent to all vertices of $C_i\dot\cup C_j$ for some $i,j$ satisfying $1\leqslant i<j\leqslant 3$. Hence, $w$ is adjacent to all vertices of $C_i$ for exactly one $i\in\{1,2,3\}$. Take $z_i'\in C_i-\{z_i\}$ such that $z_i'$ has no neighbours in $C_j$ for $i,j\in\{1,2,3\}$ such that $i\neq j$. Then each vertex in $\Delta(x)-\{z_1',z_2',z_3'\}$ has exactly one of $\{z_1',z_2',z_3'\}$ as its neighbour. Hence, we are back to the case $R=\emptyset$.
This finishes the proof of Claim \ref{localgraph}.
\qed

Since $\Delta(x)\cong 3K_{\frac{k}{3}}$ for $x\in V(G)$, we obtain that $G$ is strongly regular with parameters $(\frac{k^2+3k+3}{3}$, $k,\frac{k-3}{3},2)$. Assume $G$ has eigenvalues $k>\theta>\tau$ with respective multiplicities $1,m_\theta,m_\tau$. By Lemma \ref{SRGspectrum}, we have
\begin{equation*}
  m_\tau-m_\theta=\frac{2k+(\frac{k^2+3k+3}{3}-1)(\frac{k-3}{3}-2)}{\sqrt{(\frac{k-3}{3}-2)^2+4(k-2)}}.
\end{equation*}
Note that $ m_\tau-m_\theta$ is an integer, as $k\geqslant120$. It follows that $(\frac{k-3}{3}-2)^2+4(k-2)=\frac{k^2+18k+9}{9}$ must be a perfect square. This implies $k=0$, which is a contradiction.

This finishes the proof of Theorem \ref{l<3k/4}.
\end{proof}

Now, we are ready to prove Theorem \ref{main1}.

\noindent{\bf Proof of Theorem \ref{main1}.}
Let $G$ be a walk-regular and strongly co-edge-regular graph with parameters $(n,k,2,\ell)$, such that $k\geqslant120$. Assume that $\theta_{\min}\geqslant-3$. By Theorem \ref{l<3k/4} we see that no graph exists for $l < \frac{3}{4}k$. For $l \geq \frac{3}{4}k$, G is a $p \times q$-grid, where $p+q=k+2$ and $l = k-2$ by Theorem \ref{grid}, as $k \geq 120$. This completes the proof of this theorem.

\subsection{Co-edge-regular graphs with four eigenvalues}\label{4.2}
In this subsection we study co-edge-regular graphs with four eigenvalues.
We start with the following lemma.
\begin{lemma}\label{four}
Let $G$ be a connected regular graph with $n$ vertices and valency $k$. If $G$
has exactly four distinct eigenvalues $\{\theta_0=k, \theta_1, \theta_2, \theta_3\}$, then $G$ is walk-regular.
If moreover $G$ is co-edge-regular with parameters $(n, k, c)$, then $G$ is strongly co-edge-regular with parameters $(n, k, c, \ell)$ where $\ell=(\sum\limits_{i=1}^3 \theta_i )c +\frac{\prod_{i=1}^3(k-\theta_i)}{n} - (k-c)c.$
\end{lemma}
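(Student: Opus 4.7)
The plan is to derive both conclusions from a single matrix identity. Since $G$ is connected and $k$-regular, the spectral idempotent for the eigenvalue $k$ equals $\frac{1}{n}J$, and because $A$ has exactly the four distinct eigenvalues $k,\theta_1,\theta_2,\theta_3$ one has
\begin{equation*}
\prod_{i=1}^{3}(A-\theta_i I)=\frac{\prod_{i=1}^{3}(k-\theta_i)}{n}\, J.
\end{equation*}
Expanding and solving for $A^3$ expresses it as a linear combination of $I$, $A$, $A^2$ and $J$. Each of these four matrices has constant diagonal (the values are $1$, $0$, $k$ and $1$ respectively), hence $(A^3)_{xx}$ is independent of $x$. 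Since the minimal polynomial of $A$ has degree $4$, every power $A^r$ is a polynomial in $A$ of degree at most $3$ and therefore a linear combination of $I,A,A^2,A^3$; so every $(A^r)_{xx}$ is constant, proving that $G$ is walk-regular.

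For the value of $\ell$ I would compute $(A^3)_{xz}$ in two different ways for a fixed pair of distinct non-adjacent vertices $x,z$. Reading off the $(x,z)$-entry of the identity above and substituting $(A^2)_{xz}=c$, $A_{xz}=0$, $I_{xz}=0$ yields
\begin{equation*}
(A^3)_{xz}=\Bigl(\sum_{i=1}^{3}\theta_i\Bigr)c+\frac{\prod_{i=1}^{3}(k-\theta_i)}{n}.
\end{equation*}
Independently, $(A^3)_{xz}=\sum_{w\sim z}(A^2)_{xw}$. Since $x\not\sim z$, each $w\in N_G(z)$ satisfies $w\neq x$, so $(A^2)_{xw}=a_{xw}$ when $w\in N_G(x)\cap N_G(z)$ and $(A^2)_{xw}=c$ when $w\in N_G(z)\setminus N_G(x)$; combining with $|N_G(x)\cap N_G(z)|=c$ and $|N_G(z)\setminus N_G(x)|=k-c$ gives
\begin{equation*}
(A^3)_{xz}=\sum_{w\sim x,\; w\sim z}a_{xw}+(k-c)c.
\end{equation*}

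Equating the two expressions for $(A^3)_{xz}$ yields
\begin{equation*}
\sum_{w\sim x,\; w\sim z}a_{xw}=\Bigl(\sum_{i=1}^{3}\theta_i\Bigr)c+\frac{\prod_{i=1}^{3}(k-\theta_i)}{n}-(k-c)c,
\end{equation*}
a quantity that does not depend on the chosen non-adjacent pair $(x,z)$. This is exactly the defining identity of a strongly co-edge-regular graph and gives the claimed value of $\ell$. I do not expect a serious obstacle in this argument; the only two points requiring care are the passage from the matrix identity to a single scalar entry, and the correct split of $(A^2)_{xw}$ according to whether $w$ is a common neighbour of $x$ and $z$ or a neighbour of $z$ lying outside $N_G(x)$.
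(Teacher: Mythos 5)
Your proposal is correct and follows essentially the same route as the paper: both use the Hoffman-type identity $\prod_{i=1}^{3}(A-\theta_i I)=\frac{\prod_{i=1}^{3}(k-\theta_i)}{n}J$, read off the $(x,z)$-entry for a non-adjacent pair, and equate it with the combinatorial count $\sum_{w\sim x,\,w\sim z}a_{xw}+(k-c)c$. The only difference is that you spell out the walk-regularity argument (constant diagonals of $I,A,A^2,A^3$) where the paper simply cites Van Dam, which is a fine and self-contained substitute.
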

\begin{proof}
Let $G$ be a connected regular graph with $n$ vertices and valency $k$ having exactly four distinct eigenvalues $\{\theta_0=k, \theta_1, \theta_2, \theta_3\}$.
Then the adjacency matrix $A$ of $G$ satisfies the following equation (see \cite{hoff}):
\begin{align}\label{eq2.3}
A^3-( \sum\limits_{i=1}^3 \theta_i )A^2+ ( \sum_{1\leq i<j\leq 3}\theta_i\theta_j ) A-\theta_1\theta_2\theta_3I=\frac{\prod_{i=1}^3(k-\theta_i)}{n}J.
\end{align}
This implies that $G$ is walk-regular, as was shown by Van Dam \cite{vanDam1995}.
Now assume that $G$ is also co-edge-regular with parameters $(n, k, c)$.
Let $x, y$ be two vertices at distance 2.
Then Equation (\ref{eq2.3}) gives us
$$(A^3)_{xy} = (\sum\limits_{i=1}^3 \theta_i )c +\frac{\prod_{i=1}^3(k-\theta_i)}{n}.$$
This implies
$$\sum_z a_{xz} + (k-c)c = (\sum\limits_{i=1}^3 \theta_i )c +\frac{\prod_{i=1}^3(k-\theta_i)}{n},$$ where the first sum is taken over all common neighbours $z$ of $x$ and $y$.
It follows that $G$ is strongly co-edge-regular with parameters $(n, k, c, \ell)$ where $\ell=(\sum\limits_{i=1}^3 \theta_i )c +\frac{\prod_{i=1}^3(k-\theta_i)}{n} - (k-c)c.$
This shows the lemma.
\end{proof}

As an immediate  consequence of Theorems \ref{grid} and \ref{main1} and Lemma \ref{four}, we obtain Theorem \ref{fourev}.

\section*{Acknowledgments}
\indent

We would like to thank the referees for their detailed comments. Their comments significantly improved the paper.

Brhane Gebremichel is supported by a Chinese Scholarship Council at University of Science and Technology of China.

We greatly thank professor Min Xu supporting M.-Y. Cao to visit University of Science and Technology of China.

J.H. Koolen is partially supported by the National Natural Science Foundation of China (No. 12071454), Anhui Initiative in Quantum Information Technologies (No. AHY150000), and
the project "Analysis and Geometry on Bundles" of Ministry of Science and Technology of the People's Republic of China.

\bibliographystyle{plain}
\bibliography{BCK}

\end{document}